 \newtheorem{thm}{Theorem}[section]
 \newtheorem{cor}[thm]{Corollary}
 \newtheorem{prop}[thm]{Proposition}
 \theoremstyle{definition}
 \newtheorem{defn}[thm]{Definition}
 \newtheorem{ex}[thm]{Example}
 \newtheorem*{finrem}{Final remark}
 \numberwithin{equation}{section}
\newcommand{\bN}{\mathbb N}
\newcommand{\bZ}{\mathbb Z}
\newcommand{\bR}{\mathbb R}
\newcommand{\bC}{\mathbb C}
\newcommand{\bT}{\mathbb T}
\newcommand{\Id}{\operatorname{Id}}
\newcommand{\End}{\operatorname{End}}
\newcommand{\Per}{\operatorname{Per}}
\newcommand{\Det}{\operatorname{Det}}
\renewcommand{\Re}{\operatorname{Re}}
\newcommand{\card}{\operatorname{card}}
\newcommand{\Ns}{\operatorname{Ns}}
\newcommand{\Mon}{\operatorname{Mon}}
\newcommand{\fb}{\boldsymbol{f}}
\newcommand{\gb}{\boldsymbol{g}}
\newcommand{\fbs}{{}^{*\!}\boldsymbol{f}}
\newcommand{\gbs}{{}^{*\!}\boldsymbol{g}}
\newcommand{\fbo}{{}^{\circ\!}\boldsymbol{f}}
\newcommand{\ub}{\boldsymbol{u}}
\newcommand{\vb}{\boldsymbol{v}}
\newcommand{\xb}{\boldsymbol{x}}
\newcommand{\yb}{\boldsymbol{y}}
\newcommand{\zb}{\boldsymbol{z}}
\newcommand{\Ab}{\boldsymbol{A}}
\newcommand{\Vb}{\mathbf{V}}
\newcommand{\Vbs}{{}^*\mathbf{V}}
\newcommand{\B}{\mathcal{B}}
\newcommand{\C}{\mathcal{C}}
\newcommand{\F}{\mathcal{F}}
\newcommand{\K}{\mathcal{K}}
\newcommand{\T}{\mathcal{T}}
\newcommand{\U}{\mathcal{U}}
\newcommand{\AAf}{\mathfrak{A}}
\newcommand{\BBf}{\mathfrak{B}}
\newcommand{\XX}{\mathfrak{X}}
\newcommand{\YY}{\mathfrak{Y}}
\newcommand{\AAst}{{}^*\mathfrak{A}}
\newcommand{\Ast}{{}^{*\!}A}
\newcommand{\Cst}{{}^{*}C}
\newcommand{\Dst}{{}^{*\!}D}
\newcommand{\Fst}{{}^{*\!}F}
\newcommand{\Gst}{{}^{*}G}
\newcommand{\Rst}{{}^{*\!}R}
\newcommand{\Ust}{{}^{*}U}
\newcommand{\Vst}{{}^{*}V}
\newcommand{\Wst}{{}^{*}W}
\newcommand{\Xst}{{}^{*\!}X}
\newcommand{\Yst}{{}^{*}Y}
\newcommand{\as}{{}^{*\!}a}
\newcommand{\xst}{{}^{*\!}x}
\newcommand{\xo}{{}^{\circ\!}x}
\newcommand{\fst}{{}^{*\!}f}
\newcommand{\fo}{{}^{\circ\!}f}
\newcommand{\phist}{{}^{*\!}\phi}
\newcommand{\Fwtlst}{{}^{*\!}\widetilde F}
\newcommand{\Gwtlst}{{}^{*}\widetilde G}
\newcommand{\alfa}{\alpha}
\newcommand{\gama}{\gamma}
\newcommand{\eps}{\varepsilon}
\newcommand{\lam}{\lambda}
\newcommand{\sig}{\sigma}
\newcommand{\Gam}{\varGamma}
\newcommand{\Lam}{\varLambda}
\newcommand{\alfab}{\boldsymbol{\alpha}}
\newcommand{\alfabs}{{}^{*\!}\boldsymbol{\alpha}}
\newcommand{\betab}{\boldsymbol{\beta}}
\newcommand{\betabs}{{}^{*\!}\boldsymbol{\beta}}
\newcommand{\pib}{\boldsymbol{\pi}}
\newcommand{\phib}{\boldsymbol{\phi}}
\newcommand{\phibs}{{}^{*\!}\boldsymbol{\phi}}
\newcommand{\apr}{\approx}
\newcommand{\aprX}{\approx_X}
\newcommand{\aprY}{\approx_Y}
\newcommand{\sbs}{\subseteq}
\newcommand{\sps}{\supseteq}
\newcommand{\nin}{\notin}
\newcommand{\cd}{\cdot}
\newcommand{\cx}{\times}
\newcommand{\ox}{\otimes}
\newcommand{\co}{\circ}
\newcommand{\mto}{\mapsto}
\newcommand{\rst}{\!\restriction\!}
\newcommand{\wtl}{\widetilde}
\newcommand{\Imp}{\ \Rightarrow\ }
\newcommand{\Iff}{\ \Leftrightarrow\ }
\newcommand{\all}{\forall\,}
\renewcommand{\:}{\colon}
\begin{document}

\title[Stability of functional equations in the compact-open topology]%
{Stability of systems of general functional equations
in the compact-open topology}

\author[P. Zlato\v{s}]{Pavol Zlato\v{s}}

\address{%
Faculty of Mathematics, Physics and Informatics, Comenius University,
\newline\indent
Mlynsk\'a dolina, 842\,48~Bratislava, Slovakia}

\email{zlatos@fmph.uniba.sk}

\keywords{System of functional equations, continuous solution,
stability, locally compact, completely regular, uniformity,
nonstandard analysis}

\subjclass[2010]{Primary 39B82; \\ \indent
{}\phantom{1991 \ {\it Mathematics Subject Classification.}}
Secondary 39B72, 54D45, 54E15, 54J05}

\thanks{Research supported by the grant no.~1/0608/13
of the Slovak grant agency VEGA}

\begin{abstract}
We introduce a fairly general concept of functional equation for $k$-tuples
of functions $f_1,\dots,f_k\: X \to Y$ between arbitrary sets. The homomorphy
equations for mappings between groups and other algebraic systems, as well
as various types of functional equations and recursion formulas occurring in
mathematical analysis or combinatorics, respectively, become special cases
(of systems) of such equations. Assuming that $X$ is a locally compact and $Y$
is a completely regular topological space, we show that systems of such
functional equations, with parameters satisfying rather a modest continuity
condition, are stable in the following intuitive sense:
Every $k$-tuple of ``sufficiently continuous,'' ``reasonably bounded''
functions $X \to Y$ satisfying the given system with a ``sufficient precision''
on a ``big enough'' compact set is already ``arbitrarily close'' on an
``arbitrarily big'' compact set to a $k$-tuple of continuous functions solving
the system. The result is derived as a consequence of certain intuitively
appealing ``almost-near'' principle using the relation of infinitesimal
nearness formulated in terms of nonstandard analysis.
\end{abstract}

\maketitle


\noindent
The study of stability of functional equations in the spirit of Ulam started
with examining the stability of additive functions and more generally of
homomorphisms between metrizable topological groups, cf.~\cite{U1}, \cite{U2},
\cite{Ml}, \cite{HR}. Since that time it has developed to an established
full-value topic in mathematical and functional analysis and extended to an
extensive variety of (systems of) functional equations\,---\,see, e.g.,
\cite{Cz}, \cite{Fo}, \cite{HIR}, \cite{Ra1}, \cite{Ra2}, \cite{Sk}. However,
in most cases the stability issue was considered (explicitly or implicitly)
within the topology of uniform convergence or within the (strong) topology
given by a norm on some functional space. On the other hand, especially when
dealing with spaces of continuous functions defined on a locally compact space,
the compact-open topology (i.e., the topology of uniform convergence on compact
sets) is the most natural one. The systematic study of such local stability on
compacts and its relation to the ``usual'' global or uniform stability was
commenced by the author for homomorphisms between topological groups in
\cite{Z2} and extended to homomorphisms between topological universal algebras
in \cite{Z3}; cf.~also \cite{MZ}, \cite{SlZ}.

In the present paper we introduce a fairly general concept of functional
equation for $k$-tuples of functions $f_1,\dots,f_k\: X \to Y$ between
arbitrary sets. Then the homomorphy equations for mappings between groups
and other algebraic systems, as well as various types of functional equations
occurring in mathematical analysis (like, e.g., the sine and cosine addition
formulas) or various recursion formulas occurring in combinatorics become
just special cases (of systems) of such equations. Assuming that $X$ is a
locally compact and $Y$ is a completely regular (i.e., uniformizable)
topological space, we will show that systems of such functional equations,
with functional parameters satisfying rather a modest continuity condition,
are stable in the following intuitive sense, which will be made precise in
the final Section~4 (Theorems~\ref{thm4.4}, \ref{thm4.9}): Every $k$-tuple
of ``sufficiently continuous,'' ``reasonably bounded'' functions $X \to Y$
satisfying the given system with a ``sufficient precision'' on a ``big
enough'' compact set is already ``arbitrarily close'' on an ``arbitrarily
big'' compact set to a $k$-tuple of continuous functions solving the system.
The result is a generalization comprising several former results by the author
and his collaborators \cite{SlZ}, \cite{SpZ}, \cite{Z1}, \cite{Z2}, \cite{Z3}.
It is derived as a consequence of certain intuitively appealing stability or
``almost-near'' principle (in the sense of \cite{An}, \cite{BB}) using the
relation of infinitesimal nearness formulated in terms of nonstandard analysis
in Section~3 (Theorem~\ref{thm3.1}, Corollary~\ref{cor3.2}), generalizing a
more specific principle of this kind from \cite{SlZ}.

\section{General form of functional equations}\label{1}

\noindent
Let $X$, $Y$ be arbitrary nonempty sets and $k, m, n \ge 1$, $p \ge 0$ be
integers. A $k$-tuple of functions $\fb = (f_1,\dots,f_k)$, $f_i\: X \to Y$,
is viewed as a single function $\fb\: X \to Y^k$. Further, let
$\alfab = (\alfa_1,\dots,\alfa_m)$ be an $m$-tuple of $p$-ary operations
$\alfa_j\: X^p \to X$ (if $p=0$, a nullary operation $\alfa$ on $X$ is simply
a constant $\alfa \in X$). We use the tensor product notation to denote the
function $\fb \ox \alfab\: X^p \to Y^{k \cx m}$ assigning to every $p$-tuple
$\xb = (x_1,\dots,x_p) \in X^p$ the $k \cx m$ matrix
$$
(\fb \ox \alfab)(\xb) = \Bigl(f_i\bigl(\alfa_j(\xb)\bigr)\Bigr) =
\begin{pmatrix}
f_1\bigl(\alfa_1(\xb)\bigr) &\hdots & f_1\bigl(\alfa_m(\xb)\bigr) \\
\vdots &\ddots &\vdots \\
f_k\bigl(\alfa_1(\xb)\bigr) &\hdots & f_k\bigl(\alfa_m(\xb)\bigr)
\end{pmatrix}\,.
$$
In the trivial case when $k = m = 1$ we can identify $\fb = f$,
$\alfab = \alfa$; then $\fb \ox \alfab$ is just the composition of functions
$f \co \alfa\: X^p \to Y$. If $m=1$ and $\alfa(x) = x$ is the identity
$\Id_X$ on $X$, then $\fb \ox \alfa = (f_1,\dots,f_k) = \fb$. If $m=p$
and $\alfab = \pib = (\pi_1,\dots,\pi_m)$, where $\pi_j\:X^m \to X$
is the $j$th projection, i.e., $\pi_j(x_1,\dots,x_m) = x_j$, then
$(\fb \ox \pib)(\xb) = \bigl(f_i(x_j)\bigr) \in Y^{k \cx m}$. In general,
the function $\fb \ox \alfab$ can be identified with the matrix of
composed functions $f_i \co \alfa_j\: X^p \to Y$ ($i \le k$, $j \le m$).

Additionally, if \,$F\:Y^{k \cx m} \to Y$ is a $(k \cx m)$-ary operation
on $Y$, then $F(\fb \ox \alfab) = F \co (\fb \ox \alfab)\: X^p \to Y$ denotes
the function given by
$$
F(\fb \ox \alfab)(\xb) = F\bigl((\fb \ox \alfab)(\xb)\bigr)\,,
$$
for $\xb \in X^p$. More generally, for any mapping
$F\:Y^{k \cx m} \cx X^p \to Y$ we denote by
$\wtl F(\fb \ox \alfab)\: X^p \to Y$ the function given by
$$
\wtl F(\fb \ox \alfab)(\xb) = F\bigl((\fb \ox \alfab)(\xb),\xb\bigr)\,,
$$
for $\xb \in X^p$. Further on (except for some Examples) we will study
exclusively the latter more general case which includes the former one,
when the mapping $F$ does not depend on $\xb$, i.e., when
$F(\Ab,\ub) = F(\Ab,\vb)$ for any matrix $\Ab \in Y^{k \cx m}$ and
$\ub,\vb \in X^p$.

A \textit{general functional equation}, briefly a \textit{GFE},
\textit{of type} $(k,m,n,p)$, with \hbox{$k,m,n \ge 1$,} $p \ge 0$,
is a functional equation of the form
\begin{equation}\label{GFE0}
\wtl F(\fb \ox \alfab) = \wtl G(\fb \ox \betab)\,,
\end{equation}%
where $\fb = (f_1,\dots,f_k)$ is a $k$-tuple of
\textit{functional variables} or ``unknown'' functions $f_i\: X \to Y$,
$\alfab = (\alfa_1,\dots,\alfa_m)$ is an $m$-tuple and
$\betab = (\beta_1,\dots,\beta_n)$ is an $n$-tuple of $p$-ary
operations on the set $X$, and, finally,
$F\:Y^{k \cx m} \cx X^p \to Y$ and \hbox{$G\:Y^{k \cx n} \cx X^p \to Y$} are
any mappings. The operations (mappings) $\alfa_i$, $\beta_j$, $F$ and $G$ are
called the \textit{functional coefficients} or \textit{parameters} of the
equation. A~$k$-tuple of functions $\fb = (f_1,\dots,f_k)\: X \to Y^k$
\textit{satisfies} the GFE \eqref{GFE0}, or it is a \textit{solution}
of it, if the functions $\wtl F(\fb \ox \alfab)$, $\wtl G(\fb \ox \betab)$
coincide, i.e., if
$$
F\bigl((\fb \ox \alfab)(\xb),\xb\bigr) =
G\bigr((\fb \ox \betab)(\xb),\xb\bigr)\,,
$$
for all $\xb \in X^p$. More generally, $\fb$ \textit{satisfies} the GFE
\eqref{GFE0} \textit{on a set} $S \sbs X^p$ if the above equation holds for
each $\xb \in S$; we say that $\fb$ \textit{satisfies} the GFE \eqref{GFE0}
\textit{on a set} $A \sbs X$ if it satisfies \eqref{GFE0} on the set
$A^p \sbs X^p$.

A system of GFEs
\begin{equation}\label{GFElam}
\wtl F_\lam(\fb \ox \alfab_\lam) =
   \wtl G_\lam(\fb \ox \betab_\lam)
\qquad (\lam \in \Lam)\,,
\end{equation}
with (finite or infinite) index set $\Lam \ne \emptyset$, consist of GFEs
of particular types $(k,m_\lam,n_\lam,p_\lam)$ (with $k$ fixed and
$m_\lam$, $n_\lam$, $p_\lam$ depending on $\lam \in \Lam$). Then
$\fb = (f_1,\dots,f_k)$ is a \textit{solution of the system} if $\fb$
satisfies all the equations in it. Satisfaction of the system on some set
$A \sbs X$ is defined in the obvious way.

We do not maintain that the (systems of) GFEs of the form just defined cover
all the (systems of) functional equations one can meet, as such a claim would
be too ambitious and, obviously, not founded well enough. In particular,
functional equations dealing with compositions of functional variables
$f_i \co f_j$ or with iterated compositions like $f$, $f^2 = f \co f$,
$f^3 = f \co f \co f$, etc., do not fall under this scheme. On the other
hand, as indicated by the examples below, they still comprise a large and
representative variety of (systems of) functional equations studied so far.

Let us start with three closely related examples of algebraic nature.

\begin{ex}\label{Exhomgrp}
Let $(X,*)$, $(Y,\star)$ be two grupoids, i.e., algebraic structures with
arbitrary binary operations $*$, $\star$ on the sets $X$ and $Y$, respectively.
Let $\alfa\:X^2 \to X$ be the operation $\alfa(x_1,x_2) = x_1 * x_2$ on $X$,
$\pi_1,\,\pi_2\:X^2 \to X$ be the projections on the first and the second
variable, respectively. Further, let $F = \Id_Y$ be the identity map on $Y$
and $G(y_1,y_2) = y_1 \star y_2$ be the operation on $Y$. Then the GFE
$$
F(\fb \ox \alfab) = G(\fb \ox \pib)
$$
of type $(1,1,2,2)$, with $\fb = f\:X \to Y$, $\alfab = \alfa$ and
$\pib = (\pi_1,\pi_2)$, which rewrites as
$$
f \co \alfa = G\bigl(f \ox (\pi_1,\pi_2)\bigr)\,,
$$
simply means that
$$
f(x_1 * x_2) = f(x_1) \star f(x_2)
$$
for all $x_2,x_2 \in X$. In other words, a function $f$ satisfies the above
GFE if and only if it is a homomorphism $f\:(X,*) \to (Y,\star)$.

If both $(X,*)$, $(Y,\star)$ coincide with the additive group $(\bR,+)$
of reals, we get the Cauchy functional equation
$$
f(x + y) = f(x) + f(y)\,.
$$
If $(X,*) = (\bR,+)$ and $(Y,\star)$ is the multiplicative group
$(\bR^+,\cd)$ of positive reals, we obtain the equation
$$
f(x + y) = f(x) f(y)\,,
$$
characterizing exponential functions. If both $(X,*)$, $(Y,\star)$ denote the
set $\bR$ with the arithmetical mean $x * y = x \star y = (x+y)/2$, we have
Jensen's functional equation
$$
f\left(\frac{x+y}{2}\right) = \frac{f(x) + f(y)}{2}\,.
$$
And the list could be continued indefinitely.
\end{ex}

\begin{ex}\label{ExhomUA}
More generally, let $\Lam$ be a set of operation symbols with finite arities
$p_\lam$ ($\lam \in \Lam$), and $\XX = (X,\alfa_\lam)_{\lam \in \Lam}$
be $\YY = (Y,G_\lam)_{\lam \in \Lam}$ be two universal algebras of
signature $(p_\lam)_{\lam \in \Lam}$, i.e.,
$\alfa_\lam = \lam^\XX\: X^{p_\lam} \to X$,
$G_\lam = \lam^\YY\: Y^{p_\lam} \to Y$ are $p_\lam$-ary operations on the
sets $X$, $Y$, respectively, corresponding to the symbol $\lam \in \Lam$,
cf.~\cite{G}. A function $f\:X \to Y$ is called a \textit{homomorphism} from
$\XX$ to $\YY$, briefly $f\:\XX \to \YY$, if for each $\lam \in \Lam$ and any
$p_\lam$-tuple $\xb = (x_1,\dots,x_{p_\lam}) \in X^{p_\lam}$ we have
$$
f\bigl(\alfa_\lam(x_1,\dots,x_{p_\lam})\bigr)
= G_\lam\bigl(f(x_1),\dots,f(x_{p_\lam})\bigr)\,,
$$
(for nullary operation symbols $\lam \in \Lam$ this simply means that
$f(\alfa_\lam) = G_\lam$). Similarly as in the previous Example~\ref{Exhomgrp},
we see immediately that this is the case if and only if $f$ satisfies the
system of GFEs
$$
f \co \alfa_\lam = G_\lam\bigl(f \ox (\pi_1,\dots,\pi_{p_\lam})\bigr)
\qquad (\lam \in \Lam)\,,
$$
of types $(1,1,p_\lam,p_\lam)$, where $\pi_j\:X^{p_\lam} \to X$,
$\pi_j(\xb) = x_j$, is the $j$th projection for $j \le p_\lam$.
\end{ex}

\begin{ex}\label{Exmodmorf}
Let $(\Lam,+,\cd,0,1)$, be a ring with unit $1 \ne 0$. A (left) $\Lam$-module
$X$ is an abelian group $(X,+)$ with scalar multiplication $\Lam \cx X \to X$,
sending each pair $(\lam,x) \in \Lam \cx X$ to the scalar multiple
$\lam x \in X$, satisfying the usual axioms. Then each scalar $\lam \in \Lam$
can be regarded as an endomorphism $\lam^X\: X \to X$ of
the abelian group $(X,+)$, and the assignment $\lam \mto \lam^X$ becomes
a homomorphisms of rings
$(\Lam,+,\cd,0,1) \to \bigl(\End(X,+), +, \co, 0, \Id_X\bigr)$, cf.~\cite{HS}.
In particular, if $\Lam$ is a field, then a $\Lam$-module is just a vector
space over $\Lam$.

A homomorphism of $\Lam$-modules $X$, $Y$ is a mapping $f\: X \to Y$,
preserving the addition and scalar multiplication, i.e., satisfying
$$\begin{aligned}
f(x + y) &= f(x) + f(y)\,,\\
f(\lam x) &= \lam f(x)
\end{aligned}$$
for any $x,y \in X$, $\lam \in \Lam$. If $\Lam$ is a field, then this is the
usual definition of a linear mapping between the vector spaces $X$, $Y$.

Regarding $\Lam^+ = \{+\} \cup \Lam$ as a set of operation symbols (+~binary,
and each $\lam \in \Lam$ unary), every $\Lam$-module is simply a universal
algebra $\XX = (X,+,\lam)_{\lam \in\Lam}$, satisfying the $\Lam$-module axioms,
and a $\Lam$-module homomorphism is a homomorphism of such algebras. Now, the
previous Example~\ref{ExhomUA} applies, i.e., $f\: X \to Y$ is a $\Lam$-module
homomorphism if and only if it satisfies the system of GFEs consisting of
$$
f \co \alfa = G\bigl(f \ox (\pi_1,\pi_2)\bigr)\,,
$$
where $\alfa$ is the addition in $X$ and $G$ is the addition in $Y$, and
$$
f \co \lam^X = \lam^Y \co f \qquad (\lam \in \Lam)\,.
$$
\end{ex}

We continue with two examples of more analytic character.

\begin{ex}\label{Exsincos}
Let $\alfa\: \bR^2 \to \bR$ be the addition on $\bR$,
$F_1 = \pi_1,\,F_2 = \pi_2\: \bR^2 \to \bR$ denote the projections,
and the functions $G_1,G_2\: \bR^{2 \cx 2} \to \bR$ be given by
$$\begin{aligned}
G_1\begin{pmatrix}
a_{11} & a_{12}\\ a_{21} & a_{22}\end{pmatrix}
&= \Per\begin{pmatrix}
a_{11} & a_{12}\\ a_{21} & a_{22}
\end{pmatrix} =
a_{11} a_{22} + a_{21} a_{12}\,,\\
G_2\begin{pmatrix}
a_{11} & a_{12}\\ a_{21} & a_{22}\end{pmatrix}
&= \Det\begin{pmatrix}
a_{21} & a_{12}\\ a_{11} & a_{22}\end{pmatrix}
= a_{21} a_{22} - a_{11} a_{12}\,,
\end{aligned}$$
(notice the reversed order of elements in the first column of the determinant).
Then the system of the following two GFEs, both of type $(2,1,2,2)$, in the
couple of functional variables $\fb = (f_1,f_2)$, standing for the sine and
cosine, respectively,
$$\begin{aligned}
\pi_1(\fb \ox \alfa) = G_1\bigl(\fb \ox (\pi_1,\pi_2)\bigr)\,,\\
\pi_2(\fb \ox \alfa) = G_2\bigl(\fb \ox (\pi_1,\pi_2)\bigr)\,,
\end{aligned}$$
is nothing else but the well known sine and cosine addition formulas
$$\begin{aligned}
\sin(x + y) &= \sin x \cos y + \cos x \sin y\,, \\
\cos(x + y) &= \cos x \cos y - \sin x \sin y\,.
\end{aligned}$$
\end{ex}

\begin{ex}\label{ExGamma}
Let $\sig\: \bC \to \bC$ denote the shift $\sig(x) = x + 1$ and
$G\:\bC \cx \bC \to \bC$ be the multiplication $G(y,x) = yx$ on $\bC$.
Then the GFE
$$
f \co \sig = \wtl G(f)\,
$$
of type $(1,1,1,1)$ is the functional equation
$$
f(x+1) = f(x)\,x\,,
$$
satisfied by Euler's function $\Gamma$ on the open complex halfplane
$\{x \in \bC\mid \Re x > 0\}$.
\end{ex}

We conclude with two examples dealing with recursion in one and two variables.

\begin{ex}\label{Exrecurs}
Let $\bigl(f(x)\bigr)_{x\in\bN}$ be a sequence of elements of a set $A$, i.e.,
a function $f\:\bN \to A$, satisfying the recursion
$$
f(x + n) = G\bigl(f(x),\dots,f(x+n-1)\bigr)
$$
for a fixed $n \ge 1$ and an $n$-ary operation $G\: A^n \to A$ given in
advance. For each $j \in \bN$ we denote by $\sig^j\: \bN \to \bN$ the shift
$\sig^j(x) = x + j$. Then the above recursion formula takes the form of the
GFE
$$
f \co \sig^n = G\bigl(f \ox \bigl(\sig^0,\dots,\sig^{n-1}\bigr)\bigr)
$$
of type $(1,1,n,1)$. The more general recursion formula
$$
f(x + n) = G\bigl(f(x),\dots,f(x+n-1),x\bigr)\,,
$$
where $G\: A^n \cx \bN \to A$, takes the form of the GFE
$$
f \co \sig^n =
\wtl G\bigl(f \ox \bigl(\sig^0,\dots,\sig^{n-1}\bigr)\bigr)
$$
of type $(1,1,n,1)$.
\end{ex}

\begin{ex}\label{ExRecMult}
Let $A$ be a set and $G\:A^3 \cx \bN^2 \to A$ be an arbitrary mapping.
Consider the following recursion formula
$$
f(x+1,y+1) = G\bigl(f(x,y),f(x+1,y),f(x,y+1),x,y\bigr)\,,
$$
expressing the value of a function $f\:\bN^2 \to A$ at $(x+1,y+1)$ in terms
of its values at the preceding neighbors $(x,y)$, $(x+1,y)$, $(x,y+1)$, and
the position $(x,y)$ itself. The notorious recursion formulas
$$\begin{aligned}
\binom{n+1}{k+1} &= \binom{n}{k} + \binom{n}{k+1}\,,\\
c(k+1,l+1) &= c(k+1,l) + c(k,l+1)\,, \\
s(n+1,k+1) &= s(n,k) - n\,s(n,k+1)\,,\\
S(n+1,k+1) &= S(n,k) + (k+1)S(n,k+1)\,,
\end{aligned}$$
for binomial coefficients (both in the usual form or for
$c(k,l) = \binom{k+l}{k}$), as well as for Stirling numbers of the first and
the second kind, respectively, are just some special cases of such functional
equations for functions $f\: \bN^2 \to \bZ$.

Let $\sig_1,\sig_2\:\bN^2 \to \bN^2$ denote the shifts in the first and the
second variable, respectively, i.e., $\sig_1(x,y) = (x+1,y)$,
$\sig_2(x,y) = (x,y+1)$, and
$\sig_{12} = \sig_1 \co \sig_2 = \sig_2 \co \sig_1\:\bN^2 \to \bN^2$ be the
double shift, i.e., $\sig_{12}(x,y) = (x+1,y+1)$. Then the original recursion
formula can be written as the GFE
$$
f \co \sig_{12} = \wtl G\bigl(f \ox (\sig_0,\sig_1,\sig_2)\bigr)\,,
$$
with $\sig_0 = \Id_{\bN^2}\:\bN^2 \to \bN^2$ denoting the identity.
The generalization to recursion formulas for functions $f\:\bN^n \to A$
with $n \ge 2$ variables is straightforward.
\end{ex}

\section{Infinitesimal nearness and $S$-continuity}\label{2}

\noindent
In this section we modify the short introduction to the nonstandard approach to
continuity of mappings between topological groups from \cite{SlZ} to the more
general situation of mappings between completely regular topological spaces.
We use \cite{E} as a reference source for general topology. In order to
simplify our terminology, we assume that all (standard) topological or uniform
spaces dealt with are Hausdorff.

The reader is assumed to have some basic acquaintance with nonstandard
analysis in an extent covered either by the original Robinson's monograph
\cite{R} or, e.g., by \cite{AFHL}, or \cite{D}, or \cite{ACH}, mainly in
the parts \cite{H} and \cite{L}. In particular, some knowledge of the
nonstandard approach to topology, based on the equivalence relation of
infinitesimal nearness, is desirable.

Our exposition takes place in a~nonstandard universe which is an elementary
extension $\Vbs$ of a superstructure $\Vb$ over some set of individuals
containing at least all (classical) complex numbers and the elements of the
universal algebras or topological space dealt with. In particular, this means
that every standard universal algebra $\AAf = (A,F_\lam)_{\lam\in\Lam}$
is embedded into its nonstandard extension
$\AAst = (\Ast,\Fst_\lam)_{\lam\in\Lam}$ via the canonic elementary embedding
$a \mto \as$, and identified with its image under~${}^*$, in such a way that
for any formula $\Phi(v_1,\dots,v_n)$ of the first-order language built upon
the operation symbols $\lam \in \Lam$ and any $a_1,\dots,a_n \in A$ we have:
$$
\Phi(a_1,\dots,a_n) \ \text{holds in $\AAf$} \quad \text{if and only if}
\quad {}^*\Phi(a_1,\dots,a_n) \ \text{holds in $\AAst$},
$$
where ${}^*\Phi$ is the formula obtained from $\Phi$ by replacing each
operation $F_\lam\: A^{p_\lam} \to A$ by its extension
$\Fst_\lam\: \Ast^{p_\lam} \to \Ast$. This rule is referred to as the
\textit{transfer principle}. However, this principle applies to any tuples of
functions $\fb = (f_1,\dots,f_k)\: X \to Y^k$ and their nonstandard extensions
$\fbs = (\fst_1,\dots,\fst_k)\: \Xst \to \Yst^k$, as well.

Objects belonging to the original universe are called \textit{standard} and
objects belonging to its nonstandard extension are called \textit{internal}.
Taking the advantage of the relation between the universes of standard and
internal objects, we cannot avoid the so called \textit{external sets}, i.e.,
sets of internal objects, which themselves are not internal.

We assume that our nonstandard universe is $\kappa$-\textit{saturated} for
some sufficiently big uncountable cardinal $\kappa$, which will be specified
latter on. This is to say that any system of less than $\kappa$ internal sets
with the finite intersection property has itself nonempty intersection.
Informally, we refer to this situation by the phrase that our nonstandard
universe is \textit{sufficiently saturated}. In a similar vein, a set of
\textit{admissible size} means a set of cardinality~$< \kappa$.

If $(X,\T)$ is a topological space, then the topology $\T$ (i.e., the system
of open sets in $X$) gives rise to two different topologies on its nonstandard
extension $\Xst$.

The {\it $Q$-topology\/} is given by the base ${}^*\T$; it is Hausdorff if and
only if the original topology $\T$ on $X$ is. This topology plays rather an
auxiliary role in our accounts.

The {\it $S$-topology\/} is given by the base $\{\Ast;\,A \in \T\}$. Obviously,
the $S$-topology is coarser than the $Q$-topology and it is not Hausdorff,
unless $(X,\T)$ is discrete.

We will systematically take advantage of the fact that if $(X,\T)$ is a
(Hausdorff) completely regular space, whose topology is induced by a uniformity
$\U$ on $X$, then, in a sufficiently saturated nonstandard universe, the
$S$-topology is fully determined by a single external equivalence relation
$$
x \apr y \Iff \all U \in \U : (x,y) \in \Ust\,,
$$
called the \textit{relation of infinitesimal nearness} on $\Xst$.
At the same time the system $\{\Ust\mid U \in \U\}$ is a base of the
\textit{$S$-uniformity} on $\Xst$. Uniform continuity with respect to it
is referred to as the \textit{uniform $S$-continuity}.

The external set of all elements indiscernible from $x \in \Xst$ is called
the \textit{monad} of $x$, i.e.,
$$
\Mon(x) = \{y \in \Xst \mid y \apr x\}\,.
$$
An element $x \in \Xst$ is called \textit{nearstandard} if $x \apr x_0$ for
some $x_0 \in X$. The (external) set of all nearstandard elements in $\Xst$
is denoted by $\Ns(\Xst)$, i.e.,
$$
\Ns(\Xst) = \bigcup_{x \in X} \Mon(x)\,.
$$
For $x \in \Ns(\Xst)$ we denote by $\xst$ the the unique element $x_0 \in X$
infinitesimally close to~$x$, called the \textit{standard part} or
\textit{shadow} of~$x$.

For the rest of this section  $(X,\T_X)$ and $(Y,\T_Y)$ denote some completely
regular topological spaces, whose topologies are induced by some uniformities
$\U_X$, $\U_Y$, respectively, and $\Xst$, $\Yst$ are their canonical extensions
in a sufficiently saturated nonstandard universe; more precisely, we assume
that our nonstandard universe is $\kappa$-saturated for some cardinal $\kappa$
bigger than the cardinalities of some bases of the uniformities $\U_X$, $\U_Y$.

While the \,$Q$-continuity of internal functions $f\:\Xst \to \Yst$ is just the
${}^{*}$continuity, their $S$-continuity can be characterized in the following
intuitively appealing way in the spirit of the original infinitesimal calculus
(below, we denote the relations of infinitesimal nearness on $\Xst$, $\Yst$ by
$\aprX$, $\aprY$, respectively):

\begin{prop}\label{prop2.1}
Let $f\:\Xst \to \Yst$ be an internal function. Then
\begin{enumerate}
\item[(a)]
$f$ is $S$-continuous in a point $x_0 \in \Xst$ if and only if
$$
\all x \in \Xst: x \aprX x_0 \Imp f(x) \aprY f(x_0)\,;
$$
\item[(b)]
$f$ is $S$-continuous on a set $A \sbs \Xst$ (i.e., $f$ is $S$-continuous
in every point $a \in A$) if and only if
$$
\all a \in A \,\ \all x \in \Xst : x \aprX y \Imp f(x) \aprY f(y)\,;
$$
\item[(c)]
if $A \sbs \Xst$ is an intersection of admissibly many internal sets,
then $f$ is $S$-continuous on $A$ if and only if $f$ is uniformly
$S$-continuous on $A$.
\end{enumerate}
\end{prop}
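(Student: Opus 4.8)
The plan is to derive all three parts from a single reformulation of $S$-continuity at a point in terms of standard entourages, and then to pass from one point to a set by saturation. For part (a) I would first unwind the definition of $S$-continuity of $f$ at $x_0$ into the basic-neighborhood form: writing $\Ust[x]=\{z\in\Xst:(z,x)\in\Ust\}$, $f$ is $S$-continuous at $x_0$ iff for every $V\in\U_Y$ there is $U\in\U_X$ with $(x,x_0)\in\Ust \Imp (f(x),f(x_0))\in\Vst$ for all $x\in\Xst$, the sets $\Ust[x_0]$, $\Vst[f(x_0)]$ being exactly the basic $S$-neighborhoods. The forward implication is then immediate: if $x\aprX x_0$ then $(x,x_0)\in\Ust$ for every $U\in\U_X$, so for each $V\in\U_Y$ the entourage supplied by $S$-continuity gives $(f(x),f(x_0))\in\Vst$; as $V$ is arbitrary, $f(x)\aprY f(x_0)$.

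For the converse in (a) I would argue by contradiction with saturation. Suppose the nearness condition holds but $S$-continuity fails for some fixed $V\in\U_Y$; then for every $U\in\U_X$ the internal set $\{x\in\Xst:(x,x_0)\in\Ust \text{ and } (f(x),f(x_0))\notin\Vst\}$ is nonempty. Indexing by a base $\B$ of $\U_X$ (of admissible size by our standing assumption on $\kappa$), these sets have the finite intersection property, since a finite intersection of the $\Ust$'s equals $(U_1\cap\dots\cap U_n)^*$ and $U_1\cap\dots\cap U_n\in\U_X$ again witnesses nonemptiness. By $\kappa$-saturation the total intersection is nonempty, yielding $x$ with $x\aprX x_0$ yet $f(x)\not\aprY f(x_0)$, contradicting the hypothesis. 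Part (b) is then just (a) applied pointwise: $S$-continuity on $A$ means $S$-continuity at each $a\in A$, and quantifying the equivalence of (a) over $a\in A$ gives precisely $\all a\in A\ \all x\in\Xst: x\aprX a\Imp f(x)\aprY f(a)$.

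Part (c) is where the work lies. One direction is trivial: uniform $S$-continuity on $A$ specializes (put one argument into a point of $A$) to $S$-continuity at each point. For the nontrivial direction I would again argue by contradiction with saturation, now exploiting that $A=\bigcap_\iota A_\iota$ is an intersection of admissibly many internal sets $A_\iota$. Fix $V\in\U_Y$ and suppose no single $U\in\U_X$ witnesses uniform $S$-continuity for $V$; then for every $U\in\U_X$ there are $x,x'\in A$ with $(x,x')\in\Ust$ but $(f(x),f(x'))\notin\Vst$. Consider the internal conditions cutting out ``$x,x'\in A_\iota$'' (over all $\iota$), ``$(x,x')\in\Ust$'' (over $U$ in a base of $\U_X$), and the single condition ``$(f(x),f(x'))\notin\Vst$''. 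Every finite subfamily is satisfiable, since finitely many $A_\iota$ still contain $A$, finitely many $\Ust$ intersect to one of the same form, and the assumed witnesses supply a pair; so by $\kappa$-saturation there exist $x,x'\in A$ with $x\aprX x'$ and $f(x)\not\aprY f(x')$. But $x'\in A$ and $x\aprX x'$, so part (b) gives $f(x)\aprY f(x')$, a contradiction; hence uniform $S$-continuity holds.

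I expect the main obstacle to be the cardinality bookkeeping in (c): one must check that the whole family of internal conditions — the admissibly many defining sets $A_\iota$ of $A$ together with a base of $\U_X$ (which by hypothesis has size $<\kappa$) plus the one set coming from $V$ — remains of admissible size, so that $\kappa$-saturation applies. The decision to phrase $S$-continuity through standard entourages (rather than directly through the $S$-topology) is what keeps the finite-intersection verifications clean, since finite intersections of standard entourages are again standard entourages; I would flag this reformulation explicitly, as it is the hinge on which all three equivalences turn.
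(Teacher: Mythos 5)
The paper states Proposition~\ref{prop2.1} without proof, as a standard fact of nonstandard topology, so there is nothing to compare against line by line; judged on its own, your argument is the standard one. The reformulation of $S$-continuity through the basic entourage neighborhoods $\Ust[x_0]$, the direct forward implication in (a), the saturation argument for the converse (the internal sets $\{x:(x,x_0)\in\Ust \ \&\ (f(x),f(x_0))\nin\Vst\}$ indexed by a base of $\U_X$ of admissible size), the pointwise reading of (b), and the saturation argument for the substantive direction of (c) --- including the observation that the internal defining sets $A_\iota$ of $A$ must be thrown into the family and that the total number of conditions stays below $\kappa$ --- are all correct and are exactly how this result is proved in the literature the paper cites.

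The one step you should not call trivial is the converse direction of (c). You write that uniform $S$-continuity on $A$ ``specializes (put one argument into a point of $A$)'' to $S$-continuity at each $a\in A$. But $S$-continuity at $a$ in the sense of (a) and (b) quantifies the second point over all of $\Xst$, whereas uniform $S$-continuity on $A$, read as uniform continuity of the restriction $f\rst A$, only controls pairs with \emph{both} entries in $A$. These do not match: take $A=\{x_0\}$ for a single $x_0\in\Xst$ (an internal set) and $f$ the indicator function of $\{x_0\}$ into a two-point subspace of $Y$; then $f\rst A$ is constant, hence uniformly $S$-continuous on $A$, while $f$ is not $S$-continuous at $x_0$. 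So either you must read ``uniformly $S$-continuous on $A$'' as the existence, for each $V\in\U_Y$, of a single $U\in\U_X$ working for all $a\in A$ but with the other variable still ranging over $\Xst$ (a uniform modulus over $A$, under which your specialization really is immediate), or the equivalence in (c) must be understood with $f$ replaced by $f\rst A$ on both sides. You should state explicitly which convention you adopt; your saturation argument for the hard direction survives either reading, but the sentence as written papers over the only point in the proposition where the two readings diverge.
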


In view of (a) and (b), $S$-continuity of an internal function
$f\:\Xst \to \Yst$ can be alternatively \textit{defined} as preservation of
the relation of infinitesimal nearness by~$f$. In particular, for the canonic
extension $\fst\:\Xst \to \Yst$ of a standard function $f\:X \to Y$ we have
the following criteria (notice the subtle difference between (b) and (c)).

\begin{cor}\label{cor2.2}
Let $f\: X \to Y$ be a function. Then
\begin{enumerate}
\item[(a)]
$f$ is continuous in a point $x_0 \in X$ if and only if
$$
\all x \in \Xst : x \aprX x_0
                    \Imp \fst(x) \aprY f(x_0)\,;
$$
\item[(b)]
$f$ is continuous on a set $A \sbs X$ (i.e., $f$ is continuous in every
point $a \in A$) if and only if
$$
\all a \in A\,\ \all x \in \Xst : x \aprX a \Imp \fst(x) \aprY f(a)\,;
$$
\item[(c)]
$f$ is uniformly continuous on a set $A \sbs X$ if and only if
$$
\all x,y \in \Ast : x \aprX y \Imp \fst(x) \aprY \fst(y)\,.
$$
\end{enumerate}
\end{cor}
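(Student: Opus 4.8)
The plan is to treat part~(a) as the core, deduce (b) from it pointwise, and establish (c) by a parallel but ``uniform'' version of the same argument. The unifying observation is conceptual: by Proposition~\ref{prop2.1}(a), together with the identity $\fst(x_0) = f(x_0)$ (valid because $\fst$ extends $f$ and $x_0$ is standard), the right-hand side of (a) is exactly the assertion that $\fst$ is $S$-continuous at $x_0$; likewise the right-hand side of (c) says that $\fst$ preserves $\aprX$-nearness throughout $\Ast$, i.e.\ that $\fst$ is uniformly $S$-continuous on $\Ast$. Thus the corollary amounts to matching ordinary continuity of the standard $f$ with the corresponding $S$-continuity of its extension $\fst$. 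The two implications in each part are powered respectively by the transfer principle (for $\Rightarrow$) and by saturation (for $\Leftarrow$), and these already prove the stated equivalences directly, without further appeal to Proposition~\ref{prop2.1}.

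For (a), first I would rewrite continuity of $f$ at $x_0$ in uniform terms: for every $V \in \U_Y$ there is $U \in \U_X$ with $(x_0,x') \in U \Imp (f(x_0),f(x')) \in V$ for all $x' \in X$. For the forward implication, fix $V$, choose the corresponding $U$, and transfer this first-order statement (with the standard parameters $x_0$, $U$, $V$, $f$) to $\Xst$: whenever $(x_0,x) \in \Ust$ one gets $(f(x_0),\fst(x)) \in \Vst$. If now $x \aprX x_0$, then $(x_0,x) \in \Ust$ for \emph{every} $U$, hence $(f(x_0),\fst(x)) \in \Vst$ for every $V$, i.e.\ $\fst(x) \aprY f(x_0)$. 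For the converse I would argue by contraposition: if $f$ is discontinuous at $x_0$, there is a fixed $V \in \U_Y$ such that for each $U \in \U_X$ the internal set $B_U = \{x \in \Xst \mid (x_0,x) \in \Ust \ \text{and}\ (f(x_0),\fst(x)) \nin \Vst\}$ is nonempty, by transfer of the corresponding nonemptiness over $X$. Since the $B_U$ decrease as $U$ shrinks and are therefore closed under finite intersection (via $U_1 \cap \dots \cap U_n \in \U_X$), and a base of $\U_X$ has admissible size by our standing assumption on $\kappa$, saturation yields a single $x \in \bigcap_U B_U$; this $x$ satisfies $x \aprX x_0$ yet $\fst(x) \not\aprY f(x_0)$, contradicting the right-hand side.

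Part~(b) then follows at once, since continuity of $f$ on $A$ means continuity at every $a \in A$, and applying (a) at each such standard point $a$ reproduces precisely the displayed condition with $a$ ranging over $A$ and $x$ over $\Xst$. Part~(c) is handled by the same two-step scheme applied to the uniform-continuity definition: for every $V \in \U_Y$ there is $U \in \U_X$ with $(x,y) \in U \Imp (f(x),f(y)) \in V$ for all $x,y \in A$. The forward direction transfers this for each $V$ (now with $x,y$ both quantified) to give $(\fst(x),\fst(y)) \in \Vst$ whenever $x \aprX y$ in $\Ast$; the converse again invokes saturation, this time on the internal sets $C_U = \{(x,y) \in \Ast \cx \Ast \mid (x,y) \in \Ust,\ (\fst(x),\fst(y)) \nin \Vst\}$. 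I expect the main obstacle to lie in the backward implications: they are exactly where saturation is indispensable, and they force the standing assumption that $\kappa$ exceeds the cardinalities of bases of $\U_X$ and $\U_Y$, so that the families $\{B_U\}$ and $\{C_U\}$ are of admissible size. A secondary point requiring care is the quantifier difference flagged between (b) and (c): in (b) the base point $a$ is confined to the \emph{standard} set $A$, yielding pointwise continuity, whereas in (c) both $x$ and $y$ range over the full internal set $\Ast$, yielding the stronger uniform continuity, and the argument must respect this distinction rather than conflating the two.
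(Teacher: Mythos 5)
Your proof is correct and follows essentially the same route the paper intends: Corollary~\ref{cor2.2} is stated there without explicit proof as the specialization of Proposition~\ref{prop2.1} to the canonical extension $\fst$, and the transfer-plus-saturation argument you give is exactly the standard mechanism underlying that derivation. You also correctly locate the only delicate points\,---\,that saturation (and hence the assumption that $\kappa$ exceeds the cardinalities of bases of $\U_X$, $\U_Y$) is needed precisely for the backward implications, and that the quantifier placement distinguishes the pointwise statement (b) from the uniform statement (c).
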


Notice that under the assumption of (b), $\fst$ is $Q$-continuous on $\Ast$,
as well.
\smallskip

An internal function $f\: \Xst \to \Yst$ is called \textit{nearstandard}
if $f(x) \in \Ns(\Yst)$ for each $x \in X$. Let us remark that this is indeed
equivalent to $f$ be a nearstandard point in the nonstandard extension
${}^*\bigl(Y^X\bigr)$ of the Tikhonov product $Y^X = \{f \mid f\: X \to Y\}$.
Any nearstandard function $f\: \Xst \to \Yst$ gives rise to a function
$\fo\: X \to Y$ given by
$$
(\fo)(x) = {}^\co(f(x))\,,
$$
for $x \in X$, called the \textit{standard part} of $f$. If $f$ is additionally
$S$-continuous on $\Ns(\Xst)$, then its standard part can be extended to a map
$\fo\: \Ns(\Xst) \to Y$ (denoted in the same way), such that
$$
\fo(x) = \fo(\xo) = {}^{\co\!}(f(x))
$$
for any $x \in \Ns(\Xst)$. The situation can be depicted by the following
commutative diagram:
$$\begin{CD}
\Ns(\Xst) @>f>> \Ns(\Yst) \\
@V{{}\co}VV @VV{{}\co}V \\
X @>>{\fo}> Y
\end{CD}$$
\smallskip

A function $f\: \Xst \to \Yst$ is called \textit{$N\!S$-continuous} if it is
$S$-continuous on $\Ns(\Xst)$. Now we have the following supplement to
Proposition~\ref{prop2.1} and its Corollary~\ref{cor2.2}.

\begin{prop}\label{prop2.3}
Let $f\: \Xst \to \Yst$ be a nearstandard internal function. Then the
following implications hold:
\begin{enumerate}
\item[(a)]
if $f$ is $N\!S$-continuous, then its standard part
\,$\fo\: X \to Y$ is continuous and \,${}^*(\fo)(x) \aprY f(x)$ for
$x \in \Ns(\Xst)$;
\item[(b)]
if $f$ is $S$-continuous on some internal set $A \sps \Ns(\Xst)$, then
its standard part \,$\fo\: X \to Y$ is uniformly continuous.
\end{enumerate}
\end{prop}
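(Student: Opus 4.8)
The plan is to convert, in each part, the infinitesimal hypotheses on the internal function $f$ into the standard continuity of its shadow $\fo$, using Corollary~\ref{cor2.2} together with the transfer principle, and to let saturation build the bridge between monads and honest neighbourhoods. I would begin by recording the behaviour of the shadow on all of $\Ns(\Xst)$: for $x \in \Ns(\Xst)$ with $x_0 = \xo$, the $S$-continuity of $f$ at the standard point $x_0$ yields $f(x) \aprY f(x_0) \aprY \fo(x_0)$, whence $f(x) \in \Ns(\Yst)$, the value $\fo(x) = {}^\co(f(x))$ is well defined and independent of the chosen representative of the monad, and $f(x) \aprY \fo(x)$ throughout $\Ns(\Xst)$.

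For part (a) the core task is to prove that the standard map $\fo\: X \to Y$ is continuous, and here I would argue by contradiction. If $\fo$ were discontinuous at some $x_0 \in X$, there would be $V \in \U_Y$ such that for every $U \in \U_X$ one can pick a standard $x_U$ with $(x_0,x_U) \in U$ but $(\fo(x_0),\fo(x_U)) \nin V$. Fixing a symmetric $V_1 \in \U_Y$ with $V_1 \co V_1 \co V_1 \sbs V$ and using $f(x_0) \aprY \fo(x_0)$ and $f(x_U) \aprY \fo(x_U)$, this last condition forces $(f(x_0),f(x_U)) \nin {}^{*}V_1$. The internal sets $C_U = \{x \in \Xst : (x_0,x) \in {}^{*}U \text{ and } (f(x_0),f(x)) \nin {}^{*}V_1\}$, indexed by a base of $\U_X$, are then nonempty (each contains $x_U$) and have the finite intersection property, so by saturation there is a single $x$ belonging to all of them; this $x$ satisfies $x \aprX x_0$ while $f(x) \not\aprY f(x_0)$, contradicting the $S$-continuity of $f$ at $x_0 \in \Ns(\Xst)$. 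I expect this saturation step\,---\,turning a separate bad standard point for each entourage into one infinitesimal bad point\,---\,to be the main obstacle. Once $\fo$ is known to be continuous, the remaining claim of (a) is immediate: for $x \in \Ns(\Xst)$ with $x_0 = \xo$, Corollary~\ref{cor2.2}(a) gives ${}^{*}(\fo)(x) \aprY \fo(x_0)$, while $f(x) \aprY f(x_0) \aprY \fo(x_0)$, so ${}^{*}(\fo)(x) \aprY f(x)$ by transitivity.

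For part (b) the hypothesis of $S$-continuity on an internal set $A \sps \Ns(\Xst)$ lets me prove the stronger conclusion directly, bypassing saturation. Since $A$ is internal it is a fortiori an intersection of admissibly many internal sets, so Proposition~\ref{prop2.1}(c) upgrades $S$-continuity on $A$ to uniform $S$-continuity on $A$: for every $V \in \U_Y$ there is $U \in \U_X$ with $(f(x),f(y)) \in {}^{*}V$ whenever $x,y \in A$ and $(x,y) \in {}^{*}U$. Given $V \in \U_Y$, I would choose a symmetric $V_1$ with $V_1 \co V_1 \co V_1 \sbs V$, take the $U$ furnished for $V_1$, and restrict attention to standard $x',y' \in X \sbs A$ with $(x',y') \in U \sbs {}^{*}U$; then $(f(x'),f(y')) \in {}^{*}V_1$, and combining this with $\fo(x') \aprY f(x')$ and $f(y') \aprY \fo(y')$ gives $(\fo(x'),\fo(y')) \in {}^{*}(V_1 \co V_1 \co V_1) \sbs {}^{*}V$. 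As $\fo(x'),\fo(y')$ are standard, this means $(\fo(x'),\fo(y')) \in V$, which is precisely uniform continuity of $\fo$ (equivalently the criterion of Corollary~\ref{cor2.2}(c)); in particular $\fo$ is continuous, so part (b) subsumes part (a).
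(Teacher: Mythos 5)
Your proof is correct and follows essentially the same route as the paper's: in both cases one extracts an entourage witnessing the monadic $S$-continuity hypothesis, chains three small steps via a symmetric $V_1$ with $V_1 \co V_1 \co V_1 \sbs V$, and applies transfer to the standard values of $\fo$. The only difference is organizational\,---\,the paper obtains the entourage directly from continuity of $f$ in the $S$-topology (Proposition~\ref{prop2.1}(a)), whereas you re-derive it by contradiction through an explicit saturation argument, and in part (b) you invoke Proposition~\ref{prop2.1}(c) where the paper merely remarks that the argument is similar.
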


Notice that the function \,${}^*(\fo)$ is also $Q$-continuous .
However, even if $f$ were $S$-continuous on the whole of \,$\Xst$, the second
conclusion in (a) still cannot be strengthened to \,${}^*(\fo)(x) \aprY f(x)$
for all $x \in \Xst$.

\begin{proof}
We will prove just the first statement in (a); then the second statement
easily follows and (b) can be proved in a similar way.

Assume that $f$ is $N\!S$-continuous and denote $g = \fo\: X \to Y$ its
standard part. In order to prove the continuity of $g$, pick arbitrary
$x_0 \in X$ and $V \in \U_Y$. Let $W \in \U_Y$ be symmetric, such that
$W^3 = W \co W \co W \sbs V$. As $f$ is internal and $N\!S$-continuous,
it is also continuous in $x_0$ with respect to the $S$-topology on $\Xst$,
hence there is a $U \in \U_X$ such that
$(x,x_0) \in \Ust$ implies $\bigl(f(x),f(x_0)\bigr) \in \Wst$ for any
$x \in \Xst$. In particular, for $x \in X$ such that $(x,x_0) \in U$,
we have $g(x) \aprY f(x)$, $\bigl(f(x),f(x_0)\bigr) \in \Ust$ and
$f(x_0) \aprY g(x_0)$, hence $\bigl(g(x),g(x_0)\bigr) \in \Wst^3 \sbs \Vst$.
As $g(x), g(x_0) \in Y$, by transfer principle $\bigl(g(x),g(x_0)\bigr) \in V$.
\end{proof}

Let us conclude this section with a remark that the introduced continuity
notions can be easily generalized to tuples of functions
$\fb = (f_1,\dots,f_k)\: X \to Y^k$. As well known, $\fb$ has whatever standard
continuity property if and only if all the functions $f_i$ have this property.
The relation of infinitesimal nearness $\aprY$ can be extended to $\Yst^k$ by
$$
\yb \aprY \zb \Iff y_1 \aprY z_1 \ \&\ \dots \ \& \ y_k \aprY z_k\,,
$$
(similarly, $\aprX$ can be extended to $\Xst^p$). Then an internal function
$\fb\: \Xst \to \Yst^k$ is nearstandard if and only if all the functions $f_i$
are nearstandard; $\fb$ has anyone of the $S$-continuity properties if and only
if all the functions $f_i$ have the corresponding property. If $\fb$ is
nearstandard, then the $k$-tuple $\fbo = \bigl(\fo_1,\dots,\fo_k\bigr)$ of
functions $\fo_i\: X \to Y$ is called the \textit{standard part} of $\fb$.

\section{An infinitesimal ``almost-near'' principle for \\
systems of general functional equations}\label{3}

\noindent
Let $(X,\T_X)$, $(Y,\T_Y)$ be two completely regular topological spaces with
topologies induced by some uniformities $\U_X$, $\U_Y$, respectively. If there
is no danger of confusion, we omit the subscripts $X$, $Y$ in the notation of
the relations of infinitesimal nearness $\aprX$, $\aprY$ on $\Xst$, $\Yst$,
respectively.

Consider the GFE~\eqref{GFE0} for a $k$-tuple of functional variables
$\fb = (f_1,\dots,f_k)$. Embedding the situation into some nonstandard universe
we say that an internal function $\fb = (f_1,\dots,f_k)\: \Xst \to \Yst^k$,
\textit{almost satisfies} the equation~\eqref{GFE0} \textit{on} $\Ns(\Xst)$ if
$$
\Fwtlst(\fb \ox \alfabs)(\xb) \apr \Gwtlst(\fb \ox \betabs)(\xb)
$$
for all $\xb = (x_1\dots,x_p) \in \Ns(\Xst^p)$. Similarly,
$\fb$ \textit{almost satisfies} the system of GFEs~\eqref{GFElam} \textit{on}
$\Ns(\Xst)$ if it almost satisfies on $\Ns(\Xst)$ every equation in it.
(Notice that, due to the transfer principle,
${}^{*\!}\bigl(\wtl F\bigr) = \wtl{\Fst}$, and similarly for $G$, hence the
notation $\Fwtlst$, $\Gwtlst$ is unambiguous.)

\begin{thm}\label{thm3.1}
Let the mappings $F\: Y^{k \cx m} \cx X^p \to Y$,
\,$G\: Y^{k \cx n} \cx X^p \to Y$ be continuous in the ``matrix'' variables
$y_{ij} \in Y$ for $i \le k$ and all $j \le m,n$, respectively. If a
nearstandard internal function $\fb =(f_1,\dots,f_k)\: \Xst \to \Yst^k$ almost
satisfies the GFE~\eqref{GFE0} on $\Ns(\Xst)$, then its standard part
$\fbo = \bigl(\fo_1,\dots,\fo_k\bigr)$ is a solution of the GFE~\eqref{GFE0}.
\end{thm}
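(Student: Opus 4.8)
The plan is to check the equality $\wtl F(\fbo \ox \alfab)(\xb) = \wtl G(\fbo \ox \betab)(\xb)$ separately at each standard $p$-tuple $\xb \in X^p$. This suffices, because $\fbo$ is a genuine standard $k$-tuple of functions $X \to Y$ and both sides of the GFE~\eqref{GFE0} are standard functions of $\xb$. Fix such an $\xb$. Every standard point is nearstandard, so $\xb \in \Ns(\Xst^p)$, and the hypothesis that $\fb$ almost satisfies \eqref{GFE0} on $\Ns(\Xst)$ gives $\Fwtlst(\fb \ox \alfabs)(\xb) \apr \Gwtlst(\fb \ox \betabs)(\xb)$.

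First I would identify the two matrix arguments. Since $\xb$ is standard, $\alfabs$ and $\betabs$ take the same values on it as $\alfab$ and $\betab$, so the entries of $(\fb \ox \alfabs)(\xb)$ are $f_i(\alfa_j(\xb))$ with each $\alfa_j(\xb) \in X$ standard, and those of $(\fb \ox \betabs)(\xb)$ are $f_i(\beta_l(\xb))$ with $\beta_l(\xb) \in X$ standard. As $\fb$ is nearstandard, each $f_i(\alfa_j(\xb))$ lies in $\Ns(\Yst)$ with shadow $\fo_i(\alfa_j(\xb))$, that is, $f_i(\alfa_j(\xb)) \aprY \fo_i(\alfa_j(\xb))$, and likewise for the $\betab$-entries. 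Because infinitesimal nearness on $\Yst^{k \cx m}$ is entrywise (it is the product relation, the monads being products of monads), the internal matrix $(\fb \ox \alfabs)(\xb)$ is therefore infinitesimally close to the standard matrix $(\fbo \ox \alfab)(\xb)$, and similarly on the $\betab$ side.

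Next I would invoke the continuity hypothesis. For the fixed standard $\xb$ the map $H_\xb\: \Ab \mto F(\Ab,\xb)$ is a continuous function $Y^{k \cx m} \to Y$ between completely regular spaces, and by transfer its nonstandard extension is $\Ab \mto \Fst(\Ab,\xb)$. Applying the nonstandard criterion for continuity (Corollary~\ref{cor2.2}(a)) at the standard matrix $(\fbo \ox \alfab)(\xb)$, the matrix-nearness just established yields $\Fwtlst(\fb \ox \alfabs)(\xb) \aprY \wtl F(\fbo \ox \alfab)(\xb)$, and the analogous relation holds for $G$ and $\betab$. Chaining these two relations with the almost-satisfaction relation from the first paragraph, by symmetry and transitivity of $\aprY$ I obtain $\wtl F(\fbo \ox \alfab)(\xb) \aprY \wtl G(\fbo \ox \betab)(\xb)$. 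Both sides are standard elements of the Hausdorff space $Y$, so infinitesimal nearness forces them to coincide; this is precisely the GFE~\eqref{GFE0} evaluated at $\xb$. As $\xb \in X^p$ was arbitrary, $\fbo$ is a solution.

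I do not expect a serious obstacle: the argument is a three-term comparison glued together by transitivity of $\apr$. The one point that must be handled with care is that continuity is assumed only in the matrix variables and never in the $X^p$-argument\,---\,and this is exactly what makes the proof work, since $\fbo$ is evaluated, and the equation tested, only at a \emph{standard} $\xb$, which is held fixed throughout, so the possibly badly discontinuous dependence of $F$ and $G$ on $\xb$ never intervenes. The only remaining bookkeeping is to note that entrywise infinitesimal nearness of the matrices coincides with nearness in the product uniformity on $Y^{k \cx m}$, which legitimizes the single application of Corollary~\ref{cor2.2}(a) to $H_\xb$.
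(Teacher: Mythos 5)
Your proposal is correct and follows essentially the same route as the paper's own proof: fix a standard $\xb$, use nearstandardness of $\fb$ to replace the internal matrices by their standard shadows, apply continuity of $F$ and $G$ in the matrix variables (with $\xb$ held fixed), and conclude by transitivity of $\apr$ plus the fact that two infinitesimally close standard points of a Hausdorff space coincide. Your explicit remarks on the product uniformity on $Y^{k\cx m}$ and on why continuity in the $X^p$-argument is never needed are just slightly more detailed versions of steps the paper leaves implicit.
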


\begin{proof}
Take an arbitrary $\xb = (x_1,\dots,x_p) \in X^p$. We have
$$
\Fwtlst(\fb \ox \alfabs)(\xb) \apr
\Gwtlst(\fb \ox \betabs)(\xb)\,.
$$
As $\xb$ is standard, $\alfabs(\xb) = \alfab(\xb)$ is standard, as well,
hence
$f_i\bigl(\alfa_j(\xb)\bigr) \apr \fo_i\bigl(\alfa_j(\xb)\bigr)$ for any
$i \le k$, $j \le m$, and, as $\Fst$ is $N\!S$-continuous in the matrix
variables $y_{ij}$,
$$\begin{aligned}
\Fwtlst(\fb \ox \alfabs)(\xb)
&= \Fst\bigl((\fb \ox \alfabs)(\xb), \xb\bigr)
= \Fst\bigl((\fb \ox \alfab)(\xb), \xb\bigr)\\
&\apr \Fst\bigl((\fbo \ox \alfab)(\xb), \xb\bigr)
= F\bigl((\fbo \ox \alfab)(\xb), \xb\bigr)
= \wtl F(\fbo \ox \alfab)(\xb)\,.
\end{aligned}$$
Similarly we can get
$$
\Gwtlst(\fb \ox \betabs)(\xb) \apr \wtl G(\fbo \ox \betab)(\xb)\,.
$$
Therefore,
$$
\wtl F(\fbo \ox \alfab)(\xb) \apr \wtl G(\fbo \ox \betab)(\xb)\,,
$$
and, as both the expressions are standard,
$$
\wtl F(\fbo \ox \alfab)(\xb) = \wtl G(\fbo \ox \betab)(\xb)\,,
$$
i.e., $\fbo$ is a solution of the GFE~\eqref{GFE0}.
\end{proof}

From Theorem~\ref{thm3.1} and Proposition~\ref{prop2.3}\,(b) we readily
obtain the following consequence generalizing Theorem~2.2 from \cite{SlZ},
dealing just with the homomorphy equation in topological groups.

\begin{cor}\label{cor3.2}
Assume that the mappings $F$, $G$ are continuous in the matrix variables
$y_{ij}$. Then for every nearstandard $N\!S$-continuous internal function
$\fb = (f_1,\dots,f_k)\: \Xst \to \Yst^k$ which almost satisfies the system
of GFEs~\eqref{GFElam} on $\Ns(\Xst)$, there is a continuous solution
$\phib = (\phi_1,\dots,\phi_k)$ of the system, such that
$\phib(x) \apr \fb(x)$ for each $x \in X$.
\end{cor}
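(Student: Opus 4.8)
The plan is to exhibit the standard part $\phib = \fbo = (\fo_1,\dots,\fo_k)$ as the promised continuous solution and then to verify its three required properties---continuity, being a solution of the whole system, and infinitesimal nearness to $\fb$ on $X$---by quoting the two immediately preceding results, essentially one for each property. So the corollary is really a bookkeeping assembly of Theorem~\ref{thm3.1} and Proposition~\ref{prop2.3}, with the mild extra point that the single-equation Theorem~\ref{thm3.1} must be applied once for every index $\lam \in \Lam$.

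First I would dispose of continuity and nearness. Since $\fb$ is nearstandard, each component $f_i\:\Xst \to \Yst$ is nearstandard and $\fo_i$ is well defined; and since (as recorded at the end of Section~\ref{2}) $\fb$ is $N\!S$-continuous exactly when every $f_i$ is, Proposition~\ref{prop2.3}\,(a) applies componentwise. It yields at once that each $\fo_i\:X \to Y$ is continuous---hence $\phib = \fbo$ is continuous as a map $X \to Y^k$---together with ${}^*(\fo_i)(x) \apr f_i(x)$ for every $x \in \Ns(\Xst)$. Specializing to standard $x \in X$, where ${}^*(\fo_i)(x) = \fo_i(x)$, and reading the conjunction over $i \le k$, gives $\phib(x) \apr \fb(x)$ for each $x \in X$, which is precisely the nearness clause.

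It remains to check that $\phib = \fbo$ solves the system. The key observation is that Theorem~\ref{thm3.1} is stated for a \emph{single} GFE, whereas \eqref{GFElam} is a (possibly infinite) family; but both ``almost satisfies the system on $\Ns(\Xst)$'' and ``is a solution of the system'' were defined equation by equation. I would therefore fix an arbitrary $\lam \in \Lam$, observe that by hypothesis $\fb$ almost satisfies the $\lam$-th equation of \eqref{GFElam} on $\Ns(\Xst)$, and that the corresponding parameters $F_\lam, G_\lam$ are continuous in their matrix variables---which is exactly the standing assumption of the corollary and the hypothesis of Theorem~\ref{thm3.1}. That theorem then delivers that $\fbo$ is a solution of the $\lam$-th equation. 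As $\lam \in \Lam$ was arbitrary, $\fbo$ solves every equation, i.e., it is a solution of \eqref{GFElam}.

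I do not anticipate a genuine obstacle, which is why the conclusion is ``readily obtained'': all the real work sits inside Theorem~\ref{thm3.1} (transporting almost-satisfaction across the $N\!S$-continuous parameters to exact satisfaction by the shadow) and inside Proposition~\ref{prop2.3} (producing a continuous shadow infinitely near $\fb$). The only points deserving a little care are the componentwise passage between $\fb$ and the individual $f_i$, the identification of ${}^*\phib(x)$ with $\phib(x)$ for standard $\phib$ and $x$ that legitimizes writing $\phib(x) \apr \fb(x)$, and the fact that the variable types $(k,m_\lam,n_\lam,p_\lam)$ cause no trouble because Theorem~\ref{thm3.1} is applied separately for each $\lam$. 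Finally, should one know in addition that $\fb$ is $S$-continuous on some \emph{internal} set $A \sps \Ns(\Xst)$, then Proposition~\ref{prop2.3}\,(b) would strengthen the first conclusion from continuity to uniform continuity of $\phib$.
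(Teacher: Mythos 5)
Your proposal is correct and matches the paper's intent exactly: the paper gives no explicit proof, stating only that the corollary is ``readily obtained'' from Theorem~\ref{thm3.1} and Proposition~\ref{prop2.3}, and your assembly---taking $\phib = \fbo$, applying Proposition~\ref{prop2.3}\,(a) componentwise for continuity and the nearness $\phib(x) \apr \fb(x)$ on standard points, and applying Theorem~\ref{thm3.1} separately to each equation indexed by $\lam \in \Lam$---is precisely that argument spelled out. (You correctly use part (a) of Proposition~\ref{prop2.3}; the paper's citation of part (b) appears to be a slip, since (b) concerns uniform continuity under a stronger hypothesis, whereas (a) supplies both the continuity of the shadow and the infinitesimal nearness needed here.)
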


\section{Stability of systems of general functional equations}\label{4}

\noindent
In order to formulate a standard version of the just established nonstandard
stability principle, we need to introduce some notions\,---\,cf.~\cite{SlZ},
\cite{Z2}, \cite{Z3}.

\begin{defn}\label{def4.1}
Let $(X,\T_X)$ be a topological space and $(Y,\U_Y)$ be a uniform space.

\begin{enumerate}
\item[(a)]
A \textit{$(\T_X,\U_Y)$ continuity scale} is a mapping
$\Gam\: X \cx \B \to \T_X$, such that $\B$ is a base of the uniformity
$\U_Y$ and $\Gam(x,V)$ is a neighborhood of $x$ in $(X,\T_X)$, satisfying
$$
V \sbs W \Imp \Gam(x,V) \sbs \Gam(x,W)
$$
for any $x \in X$, and $V,W \in \B$.
\item[(b)]
Given a continuity scale $\Gam\: X \cx \B \to \T_X$, a~function $f\: X \to Y$
is called \textit{$\Gam$-continuous in a point} $x_0 \in X$, or
\textit{continuous in $x_0$ with respect to $\Gam$}, if
$$
x \in \Gam(x_0,V) \Imp \bigl(f(x),f(x_0)\bigr) \in V
$$
for each $x \in X$; $f$ is \textit{$\Gam$-continuous on a set} $A \sbs X$ if it
is $\Gam$-continuous in each point $a \in A$; it is \textit{$\Gam$-continuous}
if it is $\Gam$-continuous on $X$.
\item[(c)]
Given a continuity scale $\Gam\: X \cx \B \to \T_X$ and an entourage
$U \in \B$, a~function $f\: X \to Y$ is \textit{$(\Gam,U)$-precontinuous
in a point} $x_0 \in X$ if
$$
x \in \Gam(x_0,V) \Imp \bigl(f(x),f(x_0)\bigr) \in V
$$
for any $V \in \B$, such that $U \sbs V$, and each $x \in X$;
$(\Gam,U)$-precontinuity on a set $A \sbs X$ and on $X$ are defined in the
obvious way.
\item[(d)]
If $(X,\U_X)$ is a uniform space, too, then a \textit{$(\U_X,\U_Y)$ uniform
continuity scale} is a mapping $\Gam\: \B \to \U_X$ such that $\B$ is some base
of the uniformity $\U_Y$ and
$$
V \sbs W \Imp \Gam(V) \sbs \Gam(W)
$$
for any $V,W \in \B$.
\item[(e)]
Given a uniform continuity scale $\Gam\: \B \to \U_X$, a~function $f\: X \to Y$
is \textit{uniformly $\Gam$-continuous on a set} $A \sbs X$ if
$$
(x,y) \in \Gam(V) \Imp \bigl(f(x),f(y)\bigr) \in V
$$
for any $x,y \in A$; $f$ is \textit{uniformly $\Gam$-continuous} if it is
uniformly $\Gam$-continuous on $X$.
\item[(f)]
Given a uniform continuity scale $\Gam\: \B \to \U_X$ and an entourage
$U \in \B$, a~function $f\: X \to Y$ is \textit{uniformly
$(\Gam,U)$-precontinuous on a set} $A \sbs X$ if
$$
(x,y) \in \Gam(V) \Imp \bigl(f(x),f(y)\bigr) \in V
$$
for for any $V \in \B$, such that $U \sbs V$ and all $x,y \in A$;
$f$ is \textit{uniformly $(\Gam,U)$-precontinuous} if it is
$(\Gam,U)$-precontinuous on $X$.
\end{enumerate}
\end{defn}

Obviously, if a function $f\: X \to Y$ is $\Gam$-continuous with respect to
some continuity scale $\Gam$, then it is continuous. Conversely, if $f$ is
continuous, then, given any base $\B$ of $\U_Y$, the assignment
$$
\Gam(x_0,V) = \bigl\{x \in X\ \big|\ \bigl(f(x),f(x_0)\bigr) \in V\bigr\}\,,
$$
for $x_0 \in X$, $V \in \B$, defines a $(\T_X,\U_Y)$ continuity scale
$\Gam\: X \cx \B \to \T_X$, and, of course, $f$ is continuous with respect
to it.

The other way round, $f$ is $\Gam$-continuous if and only if it is
$(\Gam,U)$-precontinuous for all $U \in \B$. Thus each particular condition
of $(\Gam,U)$-precontinuity for an entourage $U \in \U_Y$ can be regarded
as an approximate continuity property. Informally, $f$ is ``almost
$\Gam$-continuous'' if it is $(\Gam,U)$-precontinuous for a ``sufficiently
small'' $U \in \B$. The relation between the uniform versions of these notions
is similar.

If $(X,d)$, $(Y,e)$ are metric spaces, then a $(d,e)$-continuity scale is just
a mapping $\gama\: X \cx \bR^+ \to \bR^+$ such that
$\gama(x,\eps) \le \gama(x,\eps')$ for any $x \in X$, $\eps' \ge \eps > 0$.
Then a function $f\: X \to Y$ is $\gama$-continuous in $x_0 \in X$ if
$$
d(x,x_0) < \gama(x_0,\eps) \Imp e\bigl(f(x), f(x_0)\bigr) < \eps
$$
for all $\eps > 0$ and $x \in X$. A~uniform $(d,e)$-continuity scale is an
isotone mapping  $\gama\: \bR^+ \to \bR^+$. A function $f\: X \to Y$ is
uniformly $\gama$-continuous if, for all $\eps > 0$ and $x,y \in X$,
$$
d(x,y) < \gama(\eps) \Imp e\bigl(f(x), f(y)\bigr) < \eps\,.
$$

\begin{defn}\label{def4.2}
Let $X$, $Y$ be arbitrary sets.
\begin{enumerate}
\item[(a)]
A \textit{bounding relation} from $X$ to $Y$ is any binary relation
$R \sbs X \cx Y$ such that all its stalks
$R[x] = \{y \in Y \mid (x,y) \in R\}$, for $x \in X$, are nonempty.
\item[(b)]
Given a bounding relation $R \sbs X \cx Y$, a function $f\: X \to Y$ is
\textit{$R$-bounded on a set} $A \sbs X$ if $f(a) \in R[a]$ for each
$a \in A$; $f$~is \textit{$R$-bounded} if it is $R$-bounded on $X$, i.e.,
if $f \sbs R$.
\item[(c)]
A bounding relation $R \sbs X \cx Y$ is \textit{stalkwise finite} if all
its stalks $R[x]$ are finite. If, additionally, $(Y,\T_Y)$ is a topological
space, then $R$ is called \textit{stalkwise compact} if all its stalks $R[x]$
are compact.
\end{enumerate}
\end{defn}

\begin{defn}\label{def4.3}
Let $X$ be any set, $(Y,\U_Y)$ be a uniform space and $V \in \U_H$.
\begin{enumerate}
\item[(a)]
Two functions $f,g\: X \to Y$ \,are said to be \textit{$V$-close on a set}
$A \sbs X$ if $\bigl(f(a),g(a)\bigr) \in V$ for all $a \in A$.
\item[(b)]
A~$k$-tuple $\fb = (f_1,\dots,f_k)$ of functions $f_i\: X \to Y$ is a
\textit{$V$-solution} of the GFE \eqref{GFE0} \textit{on a set} $S \sbs X^p$ if
$$
\bigl(\wtl F(\fb \ox \alfab)(\xb),\,\wtl G(\fb \ox \betab)(\xb)\bigr) \in V
$$
for all $\xb \in S$; $\fb$ is a \textit{$V$-solution} the GFE \eqref{GFE0}
\textit{on a set} $A \sbs X$ if it is its $V$-solution on $A^p$; $\fb$ is a
\textit{$V$-solution} of the system of GFEs~\eqref{GFElam} on $A \sbs X$ if
it is a $V$-solution of every equation in the system on $A$.
\end{enumerate}
\end{defn}

For brevity's sake we say that a function $\fb = (f_1,\dots,f_k)\: X \to Y^k$
has any of the just introduced $\Gam$-continuity properties if and only if
each particular function $f_i$ has the corresponding property. Similarly,
$\fb$ is $R$-bounded (on a set $A \sbs X$) if and only each function $f_i$ is.
We say that two such functions $\fb,\gb\:X \to Y^k$ are $V$-close on $A \sbs X$
if $f_i$, $g_i$ are $V$-close on $A$ for each $i \le k$.

The system of all nonempty compact sets of a topological space $(X,\T_X)$ is
denoted by $\K(X)$.

\begin{thm}\label{thm4.4}
Let $(X,\T_X)$ be a locally compact topological space, $(Y,\U_Y)$ be a uniform
space, $\Gam\: X \cx \B \to \T_X$ be a $(\T_X,\U_Y)$ continuity scale and
\hbox{$R \sbs X \cx Y$} be a stalkwise compact bounding relation. Assume that
all the functional coefficients $F_\lam\: Y^{k \cx m_\lam} \cx X^{p_\lam} \to Y$,
$G_\lam\: Y^{k \cx n_\lam} \cx X^{p_\lam} \to Y$ in the system of
GFEs~\eqref{GFElam} are continuous in the matrix variables $y_{ij}$. Then
for each pair $D \in \K(X)$, $V \in \U_Y$ there exists a pair $C \in \K(X)$,
$U \in \U_Y$ such that $D \sbs C$ and the following implication holds true:

If a $U$-solution $\fb = (f_1,\dots,f_k)\: X \to Y^k$ of the
system~\eqref{GFElam} on $C$ is both $(\Gam,U)$-precontinuous and
$R$-bounded on $C$, then there exists a continuous solution
$\phib = (\phi_1,\dots,\phi_k)$ of the system, such that $\fb$, $\phib$
are $V$-close on $D$.
\end{thm}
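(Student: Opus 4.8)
The plan is to argue by contradiction, reducing the statement to the nonstandard ``almost-near'' principle of Corollary~\ref{cor3.2}. Fix $D \in \K(X)$ and $V \in \U_Y$ and suppose that no pair $(C,U)$ as required exists. Then for \emph{every} compact $C \sps D$ and every $U \in \U_Y$ the set
$$
B_{C,U} = \bigl\{\fb\: X \to Y^k \bigm| \fb \text{ is a } U\text{-solution of the system on } C,\ (\Gam,U)\text{-precontinuous and } R\text{-bounded on } C,\ \text{but not } V\text{-close on } D \text{ to any continuous solution}\bigr\}
$$
is nonempty. First I would record the monotonicity of this family: being a $U$-solution, $(\Gam,U)$-precontinuous, and $R$-bounded on a larger $C$ is more demanding, and refining $U$ is more demanding still, so enlarging $C$ and shrinking $U$ only shrinks $B_{C,U}$, while the final ``badness'' clause does not involve $(C,U)$ at all. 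Hence for finitely many pairs $(C_i,U_i)$ one has $B_{\bigcup_i C_i,\,\bigcap_i U_i} \sbs \bigcap_i B_{C_i,U_i}$, so the family $\{B_{C,U}\}$, and therefore the internal family $\{{}^*B_{C,U}\}$, has the finite intersection property.

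Next I would apply saturation. Choosing $\kappa$ larger than $|\K(X) \cx \B|$ (which is where the as-yet-unspecified saturation cardinal gets pinned down), the finite intersection property produces a single internal $\fb = (f_1,\dots,f_k)\: \Xst \to \Yst^k$ belonging to every ${}^*B_{C,U}$. By transfer, for \emph{every} standard compact $C \sps D$ and every standard $U$, this $\fb$ is a $\Ust$-solution of \eqref{GFElam} on $\Cst$, is $({}^*\Gam,\Ust)$-precontinuous and $\Rst$-bounded on $\Cst$, and—inheriting the badness clause unchanged—is \emph{not} $\Vst$-close on $\Dst$ to any internal ${}^*$continuous ${}^*$solution of the system.

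The core of the argument is to verify that $\fb$ meets the three hypotheses of Corollary~\ref{cor3.2}. For \textit{nearstandardness}, each standard $x$ lies in $\Cst$ for a suitable compact $C \sps D$, whence $f_i(x) \in \Rst[x] = \bigl(R[x]\bigr)^*$; since $R[x]$ is compact this set consists of nearstandard points, so $\fb(x) \in \Ns(\Yst^k)$. For \textit{$N\!S$-continuity}, fix $a \in \Ns(\Xst)$, put $a_0 = {}^{\circ\!}a$, and choose a compact $C \sps D$ that is a neighborhood of $a_0$; as $\Gam(a_0,W)$ is a standard neighborhood of $a_0$ we have $\Mon(a_0) \sbs \bigl(\Gam(a_0,W)\bigr)^*$, so $({}^*\Gam,\Ust)$-precontinuity at $a_0 \in \Cst$ yields $\bigl(\fb(x),\fb(a_0)\bigr) \in {}^*W$ for every $x \aprX a_0$ and every standard $W \sps U$; as $U$ ranges over a base this holds for all standard $W$, i.e.\ $\fb(x) \aprY \fb(a_0) \aprY \fb(a)$, which is $S$-continuity at $a$ by Proposition~\ref{prop2.1}(a). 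For \textit{almost-satisfaction}, given $\xb \in \Ns(\Xst^{p_\lam})$ I would pick a compact $C \sps D$ whose interior contains the coordinates of ${}^{\circ\!}\xb$, so that $\xb \in (\Cst)^{p_\lam}$; being a $\Ust$-solution there for every standard $U$ forces the two sides of equation $\lam$ to be infinitesimally close at $\xb$. Corollary~\ref{cor3.2} then supplies a continuous solution $\phib$ of \eqref{GFElam} with $\phib(x) \apr \fb(x)$ for all $x \in X$. Its extension $\phibs$ is an internal ${}^*$continuous ${}^*$solution, and for each $d \in \Dst$ (so ${}^{\circ\!}d \in D$) the chain
$$
\fb(d) \apr \fb({}^{\circ\!}d) \apr \phib({}^{\circ\!}d) \apr \phibs(d)
$$
—the first link by $N\!S$-continuity of $\fb$, the second by the choice of $\phib$, the third by continuity of $\phib$ (Corollary~\ref{cor2.2}(a))—shows $\bigl(\fb(d),\phibs(d)\bigr) \in \Vst$. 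Thus $\fb$ \emph{is} $\Vst$-close to $\phibs$ on $\Dst$, contradicting its construction, and this contradiction proves the theorem.

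I expect the delicate point to be the saturation bookkeeping: arranging the clause ``not $V$-close on $D$ to any continuous solution'' to transfer correctly, so that in the internal world it excludes exactly the extensions $\phibs$ of standard continuous solutions, and ensuring that the index family $\K(X)\cx\B$ has admissible size so that a single internal $\fb$ serves simultaneously on every $\Cst$. Once this is in place, the remaining work—turning $R$-boundedness into nearstandardness, $(\Gam,U)$-precontinuity into $N\!S$-continuity, and $U$-solvability into almost-satisfaction—is the routine translation feeding into Corollary~\ref{cor3.2}.
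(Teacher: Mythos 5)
Your proof is correct and follows essentially the same route as the paper's: contradiction, saturation applied to the monotone family of ``bad'' sets, verification of nearstandardness, $N\!S$-continuity and almost-satisfaction so that Corollary~\ref{cor3.2} applies, and the final chain of infinitesimal nearness on $\Dst$. The only cosmetic difference is that the paper runs the saturation argument over a cofinal subfamily $\C \sbs \K(X)$ of admissible cardinality rather than over all of $\K(X) \cx \B$, which is exactly the bookkeeping point you flagged.
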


\begin{proof}
Let $(X,\T_X)$, $(Y,\U_Y)$, $\Gam\: X \cx \B \to \T_X$, $R \sbs X \cx Y$,
as well as the system of GFEs \eqref{GFElam} satisfy the assumptions of the
theorem. Then $(X,\T_X)$ is completely regular, as well, hence its topology is
induced by some uniformity $\U_X$. Admit, in order to obtain a contradiction,
that there is a pair $D \in \K(X)$, $V \in \U_Y$ for which the conclusion of
the theorem fails. For each pair $C \in \K(X)$, $U \in \B$ such that $C \sps D$
we denote by $\F(C,U)$ the set of all $U$-solutions
$\fb = (f_1,\dots,f_k)\: X \to Y^k$ of the system of GFEs~\eqref{GFElam}
on $C$ which are both $(\Gam,U)$-precontinuous and $R$-bounded on $C$,
nonetheless, $\fb$ is not $V$-close on $D$ to any continuous solution
$\phib = (\phi_1,\dots,\phi_k)$ of the system~\eqref{GFElam}. According to
our assumption, all the sets $\F(C,U)$ are nonempty, and, for all
$C,C' \in \K(X)$, $U,U' \in \B$, we obviously have
$$
D \sbs C \sbs C'\ \&\ U' \sbs U \Imp \F(C',U') \sbs \F(C,U)\,.
$$

Let us embed the situation into a sufficiently saturated nonstandard
universe. More precisely, we assume that it is $\kappa$-saturated for
some uncountable cardinal $\kappa$ such that $\card \B < \kappa$, and
that there is a cofinal subset $\C \sbs \K(X)$ such that $D \sbs C$ for
each $C \in \C$ and $\card\C < \kappa$. Then
$$
\bigcap_{C \in \C,\,U \in \B} {}^*\F(C,U) \ne \emptyset\,.
$$
Let $\fb = (f_1,\dots,f_k)$ belong to this intersection. Then
$\fb\: \Xst \to \Yst^k$ is an internal function, for all $U \in \U_Y$,
$C \in \C$, $\fb$ is ${}^{*\!}(\Gam,U)$-precontinuous and $\Rst$-bounded
on $\Cst$ and it satisfies
$$
\bigl(\Fst_\lam\bigl((\fb,\gbs_\lam) \ox \alfabs_\lam\bigr)(\xb),\,
\Gst_\lam\bigl((\fb,\gbs_\lam) \ox \betabs_\lam\bigr)(\xb)\bigr) \in \Ust
$$
for any $\lam \in \Lam$ and $\xb \in \Cst^{p_\lam}$. Since $X$ is locally
compact, $\Ns(\Xst) = \bigcup_{C \in \C} \Cst$. It follows that $\fb$
is $N\!S$-continuous and almost satisfies the system~\eqref{GFElam} on
$\Ns(\Xst)$. Finally, $\fb(x) \in \bigl(\Rst[x]\bigr)^k$ for any $C \in \C$
and $x \in \Cst$. As $R[x]$ is compact for $x \in X$, in that case we have
$\fb(x) \in \bigl(\Rst[x]\bigr)^k \sbs \Ns\bigl(\Yst^k\bigr)$. Thus $\fb$
is nearstandard. According to Corollary~\ref{cor3.2}, there is a continuous
solution $\phib = (\phi_1,\dots,\phi_k)$ of the system~\eqref{GFElam}, such
that $\fb(x) \apr \phib(x)$ each $x \in X$. On the other hand, $\phibs$ is
$Q$-continuous (i.e., ${}^*$continuous), hence $\fb$ and $\phibs$ cannot be
$\Vst$-close on $\Dst$. Thus there are an $i \le k$ and an $x \in \Dst$ such
that $\bigl(f_i(x),\phist_i(x)\bigr) \nin \Vst$. However, as $D$ is compact,
$\Dst \sbs \Ns(\Xst)$. Since both $f_i$ and $\phist_i$ are $N\!S$-continuous,
taking an $x_0 \in X$ such that $x \apr x_0$, we obtain
$$
\phist_i(x) \apr \phi_i(x_0) \apr f_i(x_0) \apr f_i(x)\,,
$$
i.e., a contradiction.
\end{proof}

Like in Theorem~\ref{thm4.4}, we assume in the next three Corollaries that all
the mappings $F_\lam$, $G_\lam$ in the system of GFEs~\eqref{GFElam} are
continuous in the matrix variables $y_{ij}$ (but, for brevity's sake, we do not
mention that explicitly). In the fourth Corollary~\ref{cor4.8} this assumption
is superfluous as it is satisfied automatically.

If $(Y,\U_Y)$ is compact, then $R = X \cx Y$ is a stalkwise compact bounding
relation such that every function $\fb\: X \to Y^k$ is $R$-bounded. This makes
possible to avoid mentioning any bounding relation in the formulation of
Theorem~\ref{thm4.4}.

\begin{cor}\label{cor4.5}
Let $(X,\T_X)$ be a locally compact topological space, $(Y,\U_Y)$ be a compact
uniform space and $\Gam\: X \cx \B \to \T_X$ be a $(\T_X,\U_Y)$ continuity
scale. Then for each pair $D \in \K(X)$, $V \in \U_Y$ there is a pair
$C \in \K(X)$, $U \in \U_Y$ such that $D \sbs C$ and the following implication
holds true:

If a $U$-solution $\fb = (f_1,\dots,f_k)\: X \to Y^k$ of the system of
GFEs~\eqref{GFElam} on $C$ is $(\Gam,U)$-precontinuous on $C$, then there
exists a continuous solution $\phib = (\phi_1,\dots,\phi_k)$ of the system,
such that $\fb$, $\phib$ are $V$-close on $D$.
\end{cor}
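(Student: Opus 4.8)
The plan is to obtain this directly from Theorem~\ref{thm4.4} by taking the largest possible bounding relation, exploiting the compactness of $Y$ to make the boundedness hypothesis vacuous. Concretely, I would set $R = X \cx Y$, so that every stalk $R[x] = Y$. Since $(Y,\U_Y)$ is compact, each such stalk is compact, whence $R$ is a stalkwise compact bounding relation in the sense of Definition~\ref{def4.2}(c). Moreover, for this choice the $R$-boundedness requirement $f(a) \in R[a] = Y$ is met by every function $f\: X \to Y$ trivially and on every subset of $X$; consequently every $k$-tuple $\fb\: X \to Y^k$ is $R$-bounded on every $C \in \K(X)$.

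With this observation in hand, I would simply instantiate Theorem~\ref{thm4.4} for the space $(X,\T_X)$, the uniform space $(Y,\U_Y)$, the given continuity scale $\Gam$, and the bounding relation $R = X \cx Y$. The continuity of the matrix coefficients $F_\lam$, $G_\lam$ is assumed throughout this block of corollaries, so all hypotheses of Theorem~\ref{thm4.4} are satisfied. For a given pair $D \in \K(X)$, $V \in \U_Y$, Theorem~\ref{thm4.4} then yields a pair $C \in \K(X)$, $U \in \U_Y$ with $D \sbs C$ for which its implication holds. Because $R$-boundedness on $C$ is automatic, the antecedent of that implication collapses to exactly the antecedent stated in the corollary, namely that $\fb$ is a $U$-solution of the system~\eqref{GFElam} on $C$ which is $(\Gam,U)$-precontinuous on $C$; and the consequent (existence of a continuous solution $\phib$ that is $V$-close to $\fb$ on $D$) is verbatim the consequent of the corollary.

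I do not expect any genuine obstacle, since the corollary is a specialization rather than a strengthening of Theorem~\ref{thm4.4}, and no fresh nonstandard argument is needed. The only points to check are the two routine remarks already isolated above: that $R = X \cx Y$ is stalkwise compact, which is immediate from the compactness of $Y$, and that $R$-boundedness is satisfied by all functions, which is immediate from $R[x] = Y$ for every $x \in X$.
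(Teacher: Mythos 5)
Your proposal is correct and coincides with the paper's own argument: the remark preceding the corollary observes precisely that, for compact $Y$, the relation $R = X \cx Y$ is stalkwise compact and makes $R$-boundedness automatic, so the corollary follows by direct specialization of Theorem~\ref{thm4.4}. Nothing further is needed.
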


If $(X,\T_X)$ is compact, then its topology is induced by a unique uniformity
$\U_X$ and, at the same time, it is enough to control the continuity of
functions $\fb\: X \to Y^k$ by means of a uniform continuity scale. Choosing
$D = X$ we get the following {\it global\/} version of Theorem~\ref{thm4.4}.

\begin{cor}\label{cor4.6}
Let $(X,\U_X)$ be a compact and $(Y,\U_Y)$ be an arbitrary uniform space,
$\Gam\: \B \to \U_X$ be a $(\U_X,\U_Y)$ uniform continuity scale and
\hbox{$R \sbs X \cx Y$} be a stalkwise compact bounding relation. Then for each
$V \in \U_Y$ there is a $U \in \U_Y$ such that the following implication holds
true:

If $\fb = (f_1,\dots,f_k)\: X \to Y^k$ is a uniformly $(\Gam,U)$-precontinuous
and $R$-bounded $U$-solution of the system of GFEs~\eqref{GFElam}, then there
exists a (uniformly) continuous solution $\phib = (\phi_1,\dots,\phi_k)$ of the
system, such that $\fb$, $\phib$ are $V$-close on $X$.
\end{cor}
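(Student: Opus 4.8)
The plan is to derive the statement directly from Theorem~\ref{thm4.4} by specializing $D = X$, the only genuine work being the passage from the \emph{uniform} continuity scale $\Gam\: \B \to \U_X$ appearing in the hypothesis to a pointwise $(\T_X,\U_Y)$ continuity scale of the kind Theorem~\ref{thm4.4} requires. Since $(X,\U_X)$ is compact, it is in particular locally compact and Hausdorff, its topology $\T_X$ is the unique one induced by $\U_X$, and every compact subset of $X$ is contained in $X$; thus $X \in \K(X)$ is the top element of $\K(X)$. These facts will let me collapse the pair $(C,U)$ produced by Theorem~\ref{thm4.4} to $(X,U)$.

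First I would manufacture a pointwise scale $\Gam'\: X \cx \B \to \T_X$ out of $\Gam$ by setting
$$
\Gam'(x_0,V) = \operatorname{int}\bigl(\Gam(V)^{-1}[x_0]\bigr)
$$
for $x_0 \in X$, $V \in \B$, where $\Gam(V)^{-1}[x_0] = \{x \in X \mid (x,x_0) \in \Gam(V)\}$. Because $\Gam(V)$ is an entourage, so is its inverse, hence $x_0$ lies in the interior of $\Gam(V)^{-1}[x_0]$ and $\Gam'(x_0,V)$ is an open neighborhood of $x_0$; the isotonicity $V \sbs W \Imp \Gam(V) \sbs \Gam(W)$ passes to inverse stalks and survives taking interiors, so $\Gam'$ is a genuine $(\T_X,\U_Y)$ continuity scale in the sense of Definition~\ref{def4.1}(a). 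Using the \emph{inverse} stalk is what avoids any appeal to symmetry of entourages: if $\fb$ is uniformly $(\Gam,U)$-precontinuous on $X$ and $x \in \Gam'(x_0,V) \sbs \Gam(V)^{-1}[x_0]$ for some $V \in \B$ with $U \sbs V$, then $(x,x_0) \in \Gam(V)$ forces $\bigl(f_i(x),f_i(x_0)\bigr) \in V$ for every $i \le k$, which is precisely $(\Gam',U)$-precontinuity of $\fb$ at $x_0$. Hence uniform $(\Gam,U)$-precontinuity on $X$ implies $(\Gam',U)$-precontinuity on $X$.

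With $\Gam'$ in hand I would apply Theorem~\ref{thm4.4} to the locally compact space $(X,\T_X)$, the scale $\Gam'$, the stalkwise compact bounding relation $R$, and the given system, taking $D = X$ and the prescribed $V \in \U_Y$. This yields a compact $C \sps X$ and an entourage $U \in \U_Y$; since $C \sbs X$ we have $C = X$, and I would take this $U$ (which depends only on $V$ and the fixed data $\Gam',R$, not on $\fb$) as the entourage asserted by the corollary. Now let $\fb$ satisfy the hypotheses of the corollary for this $U$: it is an $R$-bounded $U$-solution of the system on $X = C$ which is uniformly $(\Gam,U)$-precontinuous, hence $(\Gam',U)$-precontinuous on $C$ by the previous paragraph. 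Theorem~\ref{thm4.4} then delivers a continuous solution $\phib = (\phi_1,\dots,\phi_k)$ of the system that is $V$-close to $\fb$ on $D = X$. Finally, a continuous map from the compact space $X$ into a uniform space is automatically uniformly continuous, so each $\phi_i$, and hence $\phib$, is uniformly continuous, giving the parenthetical strengthening in the conclusion.

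The routine but real obstacle is the scale translation of the middle paragraph: one must check that $\Gam'$ satisfies \emph{all} clauses of Definition~\ref{def4.1}(a) (openness, the neighborhood property, and isotonicity) and that the quantifier structure of uniform precontinuity really collapses to the pointwise precontinuity demanded by Theorem~\ref{thm4.4}, with the inverse-stalk construction chosen precisely so this holds without assuming the entourages in $\B$ are symmetric. Everything else is immediate from compactness of $X$ — it forces $C = X$, makes the hypotheses ``on $C$'' coincide with the hypotheses ``on $X$'', and upgrades continuity of $\phib$ to uniform continuity — so no new nonstandard argument beyond the one already carried out for Theorem~\ref{thm4.4} is needed.
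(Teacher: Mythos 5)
Your proposal is correct and follows exactly the route the paper intends: the paper derives Corollary~\ref{cor4.6} from Theorem~\ref{thm4.4} by taking $D = X$ and remarking that on a compact space a uniform continuity scale suffices, which is precisely the reduction you carry out. Your explicit construction of the pointwise scale $\Gam'(x_0,V) = \operatorname{int}\bigl(\Gam(V)^{-1}[x_0]\bigr)$ and the verification that uniform $(\Gam,U)$-precontinuity implies $(\Gam',U)$-precontinuity just fill in details the paper leaves implicit.
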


Under the assumptions of both \ref{cor4.5} and \ref{cor4.6} we have

\begin{cor}\label{cor4.7}
Let $(X,\U_X)$, $(Y,\U_Y)$ be compact uniform spaces and $\Gam\: \B \to \U_X$
be a $(\U_X,\U_Y)$ uniform continuity scale. Then for each $V \in \U_Y$ there
is a $U \in \U_Y$ such that the following implication holds true:

If $\fb = (f_1,\dots,f_k)\: X \to Y^k$ is a uniformly $(\Gam,U)$-precontinuous
$U$-solution of the system of GFEs~\eqref{GFElam}, then there exists a
(uniformly) continuous solution $\phib = (\phi_1,\dots,\phi_k)$ of the system,
such that $\fb$, $\phib$ are $V$-close on $X$.
\end{cor}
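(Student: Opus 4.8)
The plan is to deduce Corollary~\ref{cor4.7} directly from the global version Corollary~\ref{cor4.6}, using the compactness of $Y$ to make the bounding relation disappear\,---\,precisely the device already employed to pass from Theorem~\ref{thm4.4} to Corollary~\ref{cor4.5}. Since Corollary~\ref{cor4.6} already controls continuity by means of a uniform continuity scale $\Gam\: \B \to \U_X$, its hypotheses on $X$, $Y$, and $\Gam$ match those of Corollary~\ref{cor4.7} verbatim; the only surplus hypothesis in~\ref{cor4.6} is the stalkwise compact bounding relation, and this is what I must supply for free.

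First I would take $R = X \cx Y$ as the bounding relation from $X$ to $Y$. Because $Y$ is compact, every stalk $R[x] = Y$ is nonempty and compact, so $R$ is a stalkwise compact bounding relation in the sense of Definition~\ref{def4.2}. For this choice the $R$-boundedness requirement becomes vacuous: any $k$-tuple $\fb = (f_1,\dots,f_k)\: X \to Y^k$ automatically satisfies $f_i(x) \in Y = R[x]$ for all $x \in X$ and $i \le k$, so $\fb$ is $R$-bounded.

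With this $R$ fixed, Corollary~\ref{cor4.6} applies and yields, for each $V \in \U_Y$, an entourage $U \in \U_Y$ with the stated property. Given any uniformly $(\Gam,U)$-precontinuous $U$-solution $\fb$ of the system~\eqref{GFElam}, it is trivially $R$-bounded as well, so Corollary~\ref{cor4.6} produces a (uniformly) continuous solution $\phib = (\phi_1,\dots,\phi_k)$ that is $V$-close to $\fb$ on $X$. This is exactly the conclusion sought, with the now-redundant $R$-boundedness clause suppressed from the statement. I expect no genuine obstacle here: the single point needing verification is that $R = X \cx Y$ is stalkwise compact, which is immediate from the compactness of $Y$, after which the whole assertion is inherited from Corollary~\ref{cor4.6}.
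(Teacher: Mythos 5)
Your derivation is correct and is exactly the route the paper intends: Corollary~\ref{cor4.7} is stated as an immediate consequence of Corollary~\ref{cor4.6} under the additional compactness of $Y$, with the bounding relation eliminated by taking $R = X \cx Y$ (the very device the paper spells out just before Corollary~\ref{cor4.5}). No gaps.
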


The interested reader can easily formulate the metric versions of
Theorem~\ref{thm4.4}, as well as of Corollaries~\ref{cor4.5}\,--\,\ref{cor4.7}.

Endowing both the sets $X$, $Y$ with discrete topologies (uniformities), all
the functions $X \to Y$ become (uniformly) continuous. Then compact subsets
of $X$ are just the finite ones and, similarly, a stalkwise compact bounding
relation $R \sbs X \cx Y$ is a stalkwise finite one. In that case, choosing
$U = \Id_Y$ in Theorem~\ref{thm4.4}, we obtain the following result on
extendability of functions satisfying a system of GFEs~\eqref{GFElam} on
some finite set to its (global) solutions.

\begin{cor}\label{cor4.8}
Let $X$ and $Y$ be arbitrary sets and $R \sbs X \cx Y$ be a stalkwise finite
bounding relation. Then for each finite set $D \sbs X$ there is a finite set
$C \sbs X$ such that $D \sbs C$ and for every $R$-bounded partial solution
$\fb = (f_1,\dots,f_k)\: X \to Y^k$ of the system of GFEs~\eqref{GFElam} on
$C$ there exists a solution $\phib = (\phi_1,\dots,\phi_k)$ of the system,
such that $\phib(x) = \fb(x)$ for all $x \in D$.
\end{cor}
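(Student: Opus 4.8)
The plan is to derive this as the limiting, ``fully discrete'' special case of Theorem~\ref{thm4.4}, exactly as the paragraph preceding the statement suggests. First I would equip $X$ with the discrete topology $\T_X$ and $Y$ with the discrete uniformity $\U_Y$, the latter being generated by the single entourage $\Id_Y$ (the diagonal), so that $\B = \{\Id_Y\}$ is a base of $\U_Y$. I would then record the standard facts that align this setting with the hypotheses of Theorem~\ref{thm4.4}: a discrete space is locally compact, its nonempty compact subsets are precisely its nonempty finite subsets, so $\K(X)$ is the family of finite nonempty subsets of $X$; every map $X \to Y$ is continuous, whence the continuity requirement on the coefficients $F_\lam,G_\lam$ holds automatically; and, since compactness in a discrete $Y$ means finiteness, the given stalkwise finite $R$ is a stalkwise compact bounding relation.

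Next I would furnish the remaining data. As continuity scale I would take $\Gam(x,\Id_Y) = \{x\}$: this is a valid $(\T_X,\U_Y)$ continuity scale, the monotonicity clause being vacuous for the singleton base, and with respect to it every function is $\Gam$-continuous and so $(\Gam,U)$-precontinuous for every $U$. I would also note the three translations that $U=V=\Id_Y$ forces: an $\Id_Y$-solution on a set is simply an exact solution there (since $(a,b)\in\Id_Y$ says $a=b$); two functions are $\Id_Y$-close on $D$ exactly when they coincide on $D$; and a \emph{continuous} solution is just a solution, $Y$ being discrete.

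I would then apply Theorem~\ref{thm4.4} to the prescribed $D\in\K(X)$ together with $V=\Id_Y$, producing a finite $C\sps D$ and an entourage $U\in\B$. The decisive simplification is that $\B=\{\Id_Y\}$ is a singleton, so the entourage supplied by the theorem can only be $U=\Id_Y$. Consequently the theorem's hypothesis on $\fb$ collapses to exactly the hypothesis of the present Corollary: being a $U$-solution on $C$ becomes being an exact (partial) solution on $C$; being $(\Gam,\Id_Y)$-precontinuous is automatic; and $R$-boundedness on $C$ is assumed. The theorem's conclusion then hands back a continuous solution $\phib$ that is $V$-close to $\fb$ on $D$; reading off the translations above, $\phib$ is a solution of the system with $\phib(x)=\fb(x)$ for every $x\in D$, which is precisely what is asserted.

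I do not expect any genuinely hard step: the entire argument is the observation that, after discretizing both spaces and taking $U=V=\Id_Y$, the quantitative stability statement of Theorem~\ref{thm4.4} degenerates into the purely combinatorial extension property claimed here. The only matters requiring attention are bookkeeping: verifying the elementary discrete-space facts above, and checking that the singleton base $\B=\{\Id_Y\}$ indeed pins the theorem's entourage to $\Id_Y$, thereby trivializing the precontinuity hypothesis and turning $V$-closeness on $D$ into equality on $D$.
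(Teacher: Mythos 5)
Your proposal is correct and is essentially the paper's own argument: the paper proves Corollary~\ref{cor4.8} in the paragraph immediately preceding it, by endowing $X$ and $Y$ with discrete topologies (uniformities) and specializing Theorem~\ref{thm4.4} with the entourage $\Id_Y$, exactly as you do. Your write-up merely makes explicit the bookkeeping the paper leaves implicit (the singleton base $\B=\{\Id_Y\}$, the trivial continuity scale $\Gam(x,\Id_Y)=\{x\}$, and the translations of $U$-solution, precontinuity and $V$-closeness), all of which check out.
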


If the arity numbers $p_\lam$ in the system of GFEs \eqref{GFElam} have
a common upper bound~$p$, then all the particular equations in the system
can be considered as being of types $(k,m_\lam,n_\lam,p)$. In such a case,
given a $U \in \U_Y$, we say that a function $\fb\: X \to Y^k$ is a
\textit{$U$-solution} of the system \eqref{GFElam} \textit{on a set}
$S \sbs X^p$ if it is a $U$-solution of each its particular equation on $S$.
Then we have the following variant of Theorem~\ref{thm4.4}. Its proof can be
obtained by slight modifications of the proof of Theorem~\ref{thm4.4} and is
left to the reader.

\begin{thm}\label{thm4.9}
Let $(X,T_X)$ be any topological space,  $(Y,\U_Y)$ be uniform space,
$\Gam\: X \cx \B \to \T_X$ be a $(\T_X,\U_Y)$ continuity scale
and \hbox{$R \sbs X \cx Y^k$} be a stalkwise compact bounding relation.
Assume that all the equations in the system of GFEs~\eqref{GFElam} have
the same arity $p_\lam = p$, $S$ is a locally compact subspace of \,$X^p$
and each of the maps $F_\lam\: Y^{k \cx m_\lam} \cx X^p \to Y$,
$G_\lam\: Y^{k \cx n_\lam} \cx X^p \to Y$ is continuous in the matrix
variables $y_{ij}$. Then for each pair $D \in \K(X)$, $V \in \U_Y$, such that
$D^p \sbs S$, there is a triple $C \in \K(X)$, $K \in \K\bigl(X^p\bigr)$,
$U \in \U_Y$, such that $D \sbs C$, $D^p \sbs K \sbs S$ and the following
implication holds true:

If a $U$-solution $\fb = (f_1,\dots,f_k)\: X \to Y^k$ of the
system~\eqref{GFElam} on $K$ is both $(\Gam,U)$-precontinuous and
$R$-bounded on $C$, then there exists a continuous solution
$\phib = (\phi_1,\dots,\phi_k)$ of the system on $S$, such that
$\fb$, $\phib$ are $V$-close on $D$.
\end{thm}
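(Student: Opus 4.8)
The plan is to transcribe the proof of Theorem~\ref{thm4.4} almost verbatim, the only genuine new feature being that the single role played there by the local compactness of $X$ now splits into two. I would argue by contradiction: assuming the conclusion fails for some $D \in \K(X)$, $V \in \U_Y$ with $D^p \sbs S$, I attach to every triple $C \in \K(X)$ with $C \sps D$, $K \in \K(X^p)$ with $D^p \sbs K \sbs S$, and $U \in \B$, the set $\F(C,K,U)$ of all $\fb = (f_1,\dots,f_k)\: X \to Y^k$ that are $U$-solutions of~\eqref{GFElam} on $K$, are $(\Gam,U)$-precontinuous and $R$-bounded on $C$, yet are not $V$-close on $D$ to any continuous solution of the system on $S$. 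By the failure hypothesis every $\F(C,K,U)$ is nonempty, and taking $C := C' \cup C''$, $K := K' \cup K''$ and $U \in \B$ with $U \sbs U' \cap U''$ exhibits the obvious monotonicity, hence the finite intersection property. Passing to a $\kappa$-saturated universe with $\kappa$ exceeding $\card\B$ and the cardinalities of a cofinal family $\C \sbs \K(X)$ above $D$ and of a cofinal family $\K_0$ of compact subsets of $S$ above $D^p$, saturation yields an internal $\fb$ in $\bigcap {}^*\F(C,K,U)$ over $C \in \C$, $K \in \K_0$, $U \in \B$.

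Next I would verify that this $\fb$ meets the hypotheses needed to build its standard part. Since singletons are compact, $\C$ covers $X$; as $\fb$ is $\Rst$-bounded on each $\Cst$ and $R \sbs X \cx Y^k$ is stalkwise compact, $\fb(a) \in \Rst[a] = {}^*(R[a]) \sbs \Ns(\Yst^k)$ for every standard $a \in X$, so $\fb$ is nearstandard. For $N\!S$-continuity I would \emph{not} use any covering of $\Ns(\Xst)$ by compacta, which is unavailable since $X$ is only a topological space; instead, fixing a standard $a \in X$, some $C \in \C$ with $a \in C$, and $x \apr a$, the transferred $(\Gam,V_0)$-precontinuity of $\fb$ on $\Cst$ together with $x \in \Mon(a) \sbs {}^*\Gam(a,V_0)$ gives $\bigl(f_i(x),f_i(a)\bigr) \in {}^*V_0$ for every standard $V_0 \in \B$, i.e.\ $\fb(x) \aprY \fb(a)$; $S$-continuity at standard points then propagates to all of $\Ns(\Xst)$ by transitivity of $\apr$, so $\fb$ is $N\!S$-continuous and, by Proposition~\ref{prop2.3}\,(a), $\fbo$ is continuous. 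Finally, the $U$-solution condition on the cofinal family $\K_0$, together with the local compactness of $S$ (whence $\Ns(\Sst) = \bigcup_{K \in \K_0} {}^*K$), shows that $\fb$ almost satisfies~\eqref{GFElam} on $\Ns(\Sst)$, and in particular at every standard $\xb \in S$; applying the pointwise computation of Theorem~\ref{thm3.1} to each equation at each such $\xb$ (where it needs only nearstandardness and the continuity of $F_\lam,G_\lam$ in the matrix variables) shows that $\fbo$ solves the system on $S$. Thus $\phib := \fbo$ is a continuous solution of~\eqref{GFElam} on $S$.

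It then remains to reach the contradiction exactly as in Theorem~\ref{thm4.4}: since $\fb \in {}^*\F(C,K,U)$ and $\phibs$ is a ${}^*$continuous solution on $\Sst$, $\fb$ is not $\Vst$-close on $\Dst$ to $\phibs$, so $\bigl(f_i(x),\phist_i(x)\bigr) \nin \Vst$ for some $i$ and some $x \in \Dst$; but $\Dst \sbs \Ns(\Xst)$ as $D$ is compact, and choosing $x_0 \in X$ with $x \apr x_0$ the chain $\phist_i(x) \apr \phi_i(x_0) \apr f_i(x_0) \apr f_i(x)$ — using $N\!S$-continuity of $\phist_i$ and of $f_i$ and nearstandardness of $\fb$ — contradicts the previous nonnearness. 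I expect the main obstacle to be precisely the decoupling described above: one must check that discarding the local compactness of $X$ costs nothing for nearstandardness and $N\!S$-continuity (both of which are only ever tested at, or infinitesimally near, standard points, which a cofinal family of compacta already reaches), while recognizing that the solution property can now be secured only over $S$, so that Corollary~\ref{cor3.2} must be replaced by the localized pointwise form of Theorem~\ref{thm3.1}; bookkeeping the third parameter $K$ through the saturation argument is then routine.
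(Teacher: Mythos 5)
Your proposal is correct and is precisely the ``slight modification of the proof of Theorem~\ref{thm4.4}'' that the paper leaves to the reader: the same contradiction/saturation scheme with the extra parameter $K$ ranging over a cofinal family of compacta of $S$ above $D^p$, nearstandardness from the stalkwise compact $R$, continuity of the standard part from the transferred precontinuity at standard points, and the solution property on $S$ obtained from the pointwise computation of Theorem~\ref{thm3.1} rather than from Corollary~\ref{cor3.2}. One caveat: the phrase ``propagates to all of $\Ns(\Xst)$ by transitivity of $\apr$'' is not literally available here, since $(X,\T_X)$ is an arbitrary (Hausdorff) topological space, need not be completely regular, and hence carries no uniformity and no equivalence relation $\aprX$ on $\Xst$; only the monads $\Mon(a)$ of \emph{standard} points $a\in X$ are defined (topologically, as $\bigcap\{\Ast : a\in A\in\T_X\}$). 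This is harmless, because full $N\!S$-continuity is never needed: $S$-continuity at standard points (which you do establish, via $\Mon(a)\sbs{}^*\Gam(a,V_0)$) is exactly what the proof of Proposition~\ref{prop2.3}\,(a) uses to get continuity of $\fbo$, and in the final contradiction the point $x\in\Dst$ is compared only with its standard part $x_0$, all remaining $\apr$-chains taking place in $\Yst$, where a uniformity is present. With that wording repaired, the argument is complete.
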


Formulation of the corresponding modified versions of Corollaries
\ref{cor4.5}\,--\,\ref{cor4.8} is left to the reader, as well.

Comparing the ``local'' stability Theorems~\ref{thm4.4}, \ref{thm4.9} and
Corollaries~\ref{cor4.5}, \ref{cor4.5}, \ref{cor4.8} with ``global''
Corollaries~\ref{cor4.6}, \ref{cor4.7} and other global stability results
we see that while global stability deals with approximation of functions
\hbox{$\fb = (f_1,\dots,f_k)\: X \to Y^k$} by continuous solutions
$\phib = (\phi_1,\dots\phi_k)$ of the given (system of) functional equation(s)
on the whole space $X$, local stability deals with approximate extension
(and if $Y$ is discrete then right by extension) of restrictions
$\fb\rst D = (f_1\rst D,\dots,f_k\rst D)$ of such functions to some
(in the present setting compact) subset $D \sbs X$ to continuous solutions
of the (system of) functional equation(s).

The interested reader can find a brief discussion of the role of nonstandard
analysis in establishing our results as well of the possibility to replace it
by some standard methods in the final part of \cite{SlZ}.

\begin{finrem}
The general form of functional equations introduced in Section~1 was designed
with the aim to prove the stability Theorems \ref{thm4.4}, \ref{thm4.9} for
all of them in a uniform way. I expected that in order to achieve this goal
it will be necessary to assume that all the functional coefficients $F$, $G$,
$\alfa_i$, $\beta_j$ are continuous (in all their variables). Having succeeded
just with the continuity of $F$ and $G$ in the ``matrix'' variables
$y_{ij} \in Y$, without requiring their continuity in the remaining variables
$x_i \in X$, and, at the same time, without any continuity assumption on the
operations $\alfa_i$, $\beta_j$, was a big surprise for me.

A revision of the results established in \cite{SlZ}, \cite{SpZ}, \cite{Z1},
\cite{Z2}, \cite{Z3} from such a point of view reveals that in most of them
some continuity assumptions can be omitted. For instance, Theorem~3 from
\cite{Z2} (as well as Theorem~2.6 from \cite {SlZ}) on stability of continuous
homomorphisms from a locally compact topological group $G$ into any topological
group $H$ remains true \textit{without assuming that $G$ is a topological
group}. It suffices that $G$ be both a group and a locally compact topological
space. Similarly, Theorem~3.1 from \cite{Z3} on stability of continuous
homomorphisms from a locally compact topological algebra $\AAf$ into a completely
regular topological algebra $\BBf$ remains true for any universal algebra $\AAf$
endowed with a locally compact (Hausdorff) topology, without assuming
continuity of the operations in $\AAf$.

Theorems \ref{thm4.4}, \ref{thm4.9} also show that both the above mentioned
results admit a generalization in yet another direction, for the former one
stated already in Theorem~2.6 in \cite{SlZ}. Namely for a mapping $f\: G \to H$
or $f\: A \to B$ in order to be close to a continuous homomorphism it is
\textit{not} necessary to assume that it is $\Gam$-continuous with respect to
the given continuity scale $\Gam$ (as both the above mentioned theorems in
\cite{Z2} and \cite{Z3} do); it is enough that $f$ be $(\Gam,U)$ precontinuous
for a sufficiently small entourage~$U$.

On the other hand, as shown by several counterexamples in \cite{SpZ} and
\cite{Z2}, even in those weaker results one cannot manage without the control
of the examined functions by means of some continuity scale and a stalkwise
compact bounding relation. The more interesting are then the stability results
not requiring the continuity scale and/or the bounding bounding relation in
their formulation. This is, e.g., the case of the global stability result for
homomorphisms from amenable groups into the group of unitary operators on a
Hilbert space in \cite{K} (covering many more specific results proved both
before and afterwards), as well as of the local stability result for
homomorphisms from amenable groups into the unit circle $\bT$ in \cite{Z2}.
\end{finrem}
\newpage

\end{document}